\newcommand{\T}{\mathcal T}
\newcommand{\s}{\sigma}
\renewcommand{\b}[1]{{#1}}
\renewcommand{\r}[1]{{\dot{#1}}}
\newcommand{\A}[1]{A_{a,b}(#1)}
\let\leq=\leqslant
\let\geq=\geqslant
\newtheorem{thm}{Theorem}[section]
\newtheorem{lem}[thm]{Lemma}
\newtheorem{prop}[thm]{Proposition}
\newtheorem{cor}[thm]{Corollary}
\newtheorem{conj}[thm]{Conjecture}
\newtheorem{defi}[thm]{Definition}
\begin{document}
\begin{center}
{\large \bf Enumeration of Corners in Tree-like Tableaux and a Conjectural ($a$,$b$)-analogue}
\end{center}

\begin{center}
Alice L.L. Gao$^{1,*}$, Emily X.L. Gao$^{2,*}$, Patxi Laborde-Zubieta$^{3,+}$ and Brian Y. Sun$^{4,\dag}$\footnotetext{$^\dag$Corresponding author.} \\[6pt]

$^{*}$Center for Combinatorics, LPMC-TJKLC\\
Nankai University, Tianjin 300071, P. R. China\\[6pt]

$^{+}$LaBRI - University of Bordeaux,\\
351, cours de la Libération F-33405 Talence cedex, France\\[6pt]

$^\dag$College of Mathematics and System Science,\\
XinJiang University, Urimqi, 830046, P. R. China\\[6pt]
Email: $^{1}${\tt gaolulublue@mail.nankai.edu.cn},
$^{2}${\tt gaoxueling@mail.nankai.edu.cn},\\
$^{3}${\tt plaborde@labri.fr},
$^{4}${\tt brianys1984@126.com}
\end{center}

\noindent\textbf{Abstract.} In this paper, we confirm a conjecture of Laborde-Zubieta on the enumeration of corners
in tree-like tableaux. Our proof is based on Aval, Boussicault and Nadeau's bijection between tree-like tableaux and permutation tableaux, and Corteel and Nadeau's bijection between permutation tableaux and permutations. This last bijection sends a corner in permutation tableaux to an ascent followed by a descent in permutations, this enables us to enumerate the number of corners in permutation tableaux, and thus to completely solve L.-Z.'s conjecture.
Moreover, we give a bijection between corners and runs of size 1 in permutations,
which gives an alternative proof of the enumeration of corners.
Finally, we introduce an ($a$,$b$)-analogue of this enumeration, and explain
the implications on the PASEP.

\noindent \emph{AMS Classification 2010:} 05A05, 05A19

\noindent \emph{Keywords:}  Permutations; Permutation tableaux; Tree-like tableaux

\section{Introduction}

Tree-like tableaux were introduced by Aval, Boussicault and Nadeau in \cite{ABN}, they are related to many combinatorial objects such as permutations, permutation tableaux, non-ambiguous trees, alternative tableaux, Dyck tableaux and so on. For more information, we refer the reader to \cite{ABB,AB,ABN,N,SW,V}. The main objective of this paper is to prove a conjecture due to Laborde-Zubieta \cite{LZ}, which gives a formula to enumerate the number of corners in tree-like tableaux, we present two proofs of this conjecture. Our first proof uses the relationship between permutation tableaux and tree-like tableaux. The second one, is based on a bijection between corners and runs of size 1 in permutations. L.-Z. also conjectured a formula on the enumeration of corners in symmetric tree-like tableaux, which was proved in \cite{GGSY}. Note that Hetczenko and Lohss \cite{HL} independently confirmed these two conjectures via the enumeration of corners in permutation tableaux, while they used a different method to enumerate the corners in permutation tableaux.
We also give a conjecture about an ($a$,$b$)-analogue of the enumeration of corners.

First, let us recall some basic notions and tools about tree-like tableaux and their underlying Young diagrams.
For two nonnegative integers $n,k$ with $n\geq k,$ a \emph{$(k,n)$-diagram} $\mathcal{D}$ is a left-justified diagram of boxes in a $k\times (n-k)$ rectangle with $\lambda_i$ boxes in the $i$-th row, where $\lambda_1\geq \lambda_2\geq\cdots\geq\lambda_k\geq 0$.
The \emph{length} of a $(k,n)$-diagram $\mathcal{D}$ is defined to be $n$, i.e., the number of its rows plus the number of its columns.
Note that a $(k,n)$-diagram $\mathcal{D}$ may have empty rows and columns.
For example, Figure \ref{3-7-diagram} is a $(3,7)-$diagram.

\setlength{\unitlength}{1mm}
\begin{center}
\begin{figure}[H]
\begin{picture}(35,12)
\multiput(70,-1)(5,0){1}{\line(0,1){15}}
\multiput(75,4)(5,0){3}{\line(0,1){10}}
\multiput(70,14)(0,5){1}{\line(1,0){20}}
\multiput(70,4)(0,5){2}{\line(1,0){15}}
\end{picture}
\caption{A $(3,7)$-diagram. }\label{3-7-diagram}
\end{figure}
\end{center}

In fact, a  $(k,n)$-diagram is determined uniquely by its southeast border.
We call an edge a \emph{border edge} if it lies on the southeast border.
Moreover, a border edge is called a \emph{south step} if it goes down along the southeast border and a \emph{west step} otherwise.
We label border edges from 1 to $n$ starting from the top right corner.
An example of this labelling is given in the left part of Figure~\ref{4-8-diagram}.

Let $\mathcal{D}$ be a $(k,n)$-diagram, rows and columns of $\mathcal{D}$ will be labeled by integers in the following manner.
Since the border edges in the southeast border of $\mathcal{D}$ are labeled by $[n]=\{1,2,\cdots,n\}$,
if a south step is labeled by $i$, then the corresponding row is labeled with $i$
and if a west step is labeled by $j$, then the corresponding column is labeled with $j$,
as illustrated in the right part of Figure~\ref{4-8-diagram}.
We will say \emph{row $i$} or  \emph{column $j$} when we actually refer to the row with the label $i$ or the column with the label $j$.
The cell located at the intersection of row $i$
and column $j$ can be denoted by $(i,j)$.

\setlength{\unitlength}{1mm}
\begin{center}
\begin{figure}[H]
\begin{picture}(50,20)
\multiput(45,0)(5,0){1}{\line(0,1){20}}

\multiput(50,5)(5,0){3}{\line(0,1){15}}
\multiput(65,15)(5,0){1}{\line(0,1){5}}
\multiput(45,5)(0,5){2}{\line(1,0){15}}
\multiput(45,15)(0,5){2}{\line(1,0){20}}
\put(45.5,1){8}\put(60.5,7){4}\put(60.5,11){3}
\put(66,17){1}\put(47.5,2){7}
{\thicklines
\put(65,20){\vector(0,-1){5}}
\put(65,15){\vector(-1,0){5}}
\multiput(60,10)(0,5){2}{\vector(0,-1){5}}
\multiput(60,5)(-5,0){3}{\vector(-1,0){5}}
\put(45,5){\vector(0,-1){5}}
}
\put(52,2){6}\put(57,2){5}\put(62.5,12){2}
\multiput(90,0)(5,0){1}{\line(0,1){20}}
\multiput(95,5)(5,0){3}{\line(0,1){15}}
\multiput(110,15)(5,0){1}{\line(0,1){5}}
\multiput(90,5)(0,5){2}{\line(1,0){15}}
\multiput(90,15)(0,5){2}{\line(1,0){20}}
\put(87.5,1){8}\put(87.5,6){4}\put(87.5,11){3}
\put(87.5,16){1}\put(91.5,21){7}
\put(96.5,21){6}\put(101.5,21){5}\put(106.5,21){2}
\end{picture}
\caption{
    A $(4,8)$-diagram with the labelling of its southeast border
    in bold (left) and the labelling of its rows and columns (right).
}
\label{4-8-diagram}
\end{figure}
\end{center}

Suppose $\mathcal{D}$ is a $(k,n)$-diagram,
a \emph{tableau} $T$ with shape $\mathcal{D}$ is a certain filling of the cells in $\mathcal{D}$ with some symbols. The \emph{length} of $T$ is defined to be the length of $\mathcal{D}$.
We call the diagram obtained by removing all the symbols in a tableau $T$ the \emph{underlying diagram} of $T$, denoted by $\mathcal{D}=\mathcal{D}(T)$.

\begin{defi}\textnormal{
A cell in a $(k,n)$-diagram $\mathcal{D}$ is said to be a \emph{corner} if its bottom and right edges are both border edges of $\mathcal{D}$.}
\end{defi}

For example, the $(3,7)$-diagram in Figure \ref{3-7-diagram} has only one corner, i.e., $\{(3,4)\}$ and the $(4,8)$-diagram in Figure \ref{4-8-diagram}
has two corners, i.e., $\{(1,2),(4,5)\}$.

A cell of a tableau $T$ is called a \emph{corner} if this cell is a corner in the underlying diagram of $T$.
Moreover, it is called an \emph{occupied corner} if this cell is filled with a symbol in $T$. Otherwise, a corner
     cell is called a \emph{non-occupied corner}.
Denote by $c(T)$ the number of corners of a tableau $T$ and extend it to a set $X$ of tableaux as follows, $$c(X)=\sum_{T\in X}c(T)$$
     which means that $c(X)$ is the number of corners in $X$.

In what follows, we shall recall two kinds of tableaux,
which are the main combinatorial objects of our work, namely tree-like tableaux and permutation tableaux.

\begin{defi}\textnormal{
A \emph{tree-like tableau} is a filling of $(k,n)-$diagram (without empty rows or empty columns) with points inside some cells, such that the resulting diagram satisfies the following three rules,
\begin{itemize}
 \item[(1)] the top left cell of the diagram contains a point, called the \emph{root point};
 \item[(2)] for every non-root pointed cell $c$, there exists either a pointed cell above $c$ in the same column, or a pointed cell to its left in the same row, but not both;
 \item[(3)] every column and every row possess at least one pointed cell.
\end{itemize}}
\end{defi}

 The \emph{size} of a tree-like tableau is defined to be its number of points. It is not difficult to see that the length of a tree-like tableau equals to its size plus one \cite{ABN}. In the sequel, we denote by $\mathcal{T}_n$ the set of the tree-like tableaux of size $n$.
An example of a tree-like tableau of size 8 is shown in the following figure.

\begin{center}
\makeatletter\def\@captype{figure}\makeatother
\begin{minipage}{.075\textwidth}
\begin{ytableau}
\bullet &&  \bullet&\\
\bullet&\bullet& &\bullet\\
&&\bullet\\
&\bullet\\
\bullet
\end{ytableau}
\end{minipage}
\caption{A tree-like tableau of size $8$.}
\label{fig: tree-like example}
\end{center}

Permutation tableaux arose in the study of totally nonnegative Grassmanian, see Postnikov \cite{post}. There have been a lot of work on the subject in many different directions since it was formally introduced by Steingr\'{i}msson and Williams in \cite{SW}. See \cite{CK,CN,N,burstein,cw}. One of the main importance of permutation tableaux relies on the fact that they are in bijection with permutations \cite{SW,burstein,CK,CN}, and tree-like tableaux \cite{ABN,LZ}.

\begin{defi}\textnormal{
A \emph{permutation tableau}, is a $(k,n)$-diagram with no empty columns together with a 0,1-filling of the cells such that
\begin{itemize}
\item[(1)] each column has at least one 1;
\item[(2)] there is no 0 which has a 1 above it in the same column and a 1 to the left of it in the same row.
\end{itemize}}
\end{defi}

We denote by $\mathcal{PT}_n$ the set of permutation tableaux of length $n$.
An example of permutation tableau of length $8$ is given in Figure \ref{permutation tableau}.

\setlength{\unitlength}{1mm}
\begin{center}
\begin{figure}[H]
\begin{picture}(40,15)
\multiput(70,0)(5,0){1}{\line(0,1){20}}
\multiput(75,5)(5,0){3}{\line(0,1){15}}
\multiput(90,15)(5,0){1}{\line(0,1){5}}
\multiput(70,5)(0,5){2}{\line(1,0){15}}
\multiput(70,15)(0,5){2}{\line(1,0){20}}
\put(71.5,6.5){0}\put(76.5,6.5){0}\put(81.5,6.5){1}
\put(71.5,11.5){1}\put(76.5,11.5){1}\put(81.5,11.5){1}
\put(71.5,16.5){0}\put(76.5,16.5){1}\put(81.5,16.5){0}\put(86.5,16.5){1}
\end{picture}
\caption{A permutation tableau of length 8.}
\label{permutation tableau}
\end{figure}
\end{center}

A \emph{permutation} $\pi$ of length $n$ is a bijection from $[n]$ to $[n]$,
we use the notation $\pi_i:=\pi(i)$ for $1\leq i\leq n$. The group of permutations of length $n$ is denoted by $\mathfrak{S}_n$.
Given a permutation $\pi=\pi_1\cdots\pi_n\in \mathfrak{S}_n$
with the convention that $\pi_{n+1}=n+1$,
we consider the following non usual definitions
for ascents and descents, we say that $\pi_i$ is a \emph{descent}
if $\pi_i>\pi_{i+1}$ and call $\pi_i$ an \emph{ascent}
if $\pi_i<\pi_{i+1}$ for $1\leq i\leq n$.

In \cite{LZ}, L.-Z. studied the enumeration of occupied corners and obtained the following results.
\begin{prop}[\cite{LZ}]\label{prop-Z} The number of occupied corners in $\mathcal{T}_n$ is $n!$.
\end{prop}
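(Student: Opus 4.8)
The plan is to exhibit an $n$-to-one map from occupied corners to tree-like tableaux of size $n-1$, after which the count drops out of the identity $|\mathcal{T}_m|=m!$ of Aval, Boussicault and Nadeau \cite{ABN}. Accordingly, the number of occupied corners in $\mathcal{T}_n$ equals the number of pairs $(T,c)$ with $T\in\mathcal{T}_n$ and $c$ an occupied corner of $T$. For $n=1$ this number is $1=1!$, since the unique tableau of size $1$ is a single pointed cell, which is a corner; so assume $n\geq 2$ from now on.

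First I would define a map $\rho$ sending such a pair $(T,c)$ to a tree-like tableau of size $n-1$. Because $n\geq 2$, the cell $c$ cannot be the root (a root that is a corner forces $T$ to be a single cell), so rule (2) applies to $c$ and exactly one of the following holds: (a) there is a pointed cell above $c$ in its column; (b) there is a pointed cell to the left of $c$ in its row. In case (a), since $c$ is the rightmost cell of its row, rule (2) forces $c$ to be the only pointed cell of that row; I would set $\rho(T,c)$ to be $T$ with the entire row of $c$ removed. One checks this is a legal tree-like tableau of size $n-1$: the root survives (otherwise $c$ would lie in the first row together with the pointed root, contradicting that $c$ is the only point of its row); no column becomes empty (here one uses that $c$, being an occupied corner, is the bottom cell of its column, and that by hypothesis (a) there is still a pointed cell above it); and rules (1)--(3) persist. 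Case (b) is symmetric: $c$ is the only pointed cell of its column, and one removes the entire column of $c$.

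The heart of the argument is that $\rho$ is exactly $n$-to-one. Fix $T''\in\mathcal{T}_{n-1}$; its length is $n$, so its number of rows plus its number of columns equals $n$, and I would match $\rho^{-1}(T'')$ bijectively with the rows and columns of $T''$. To the $h$-th row of $T''$ (counted from the top) associate the pair $(T,c)$ obtained from $T''$ by adjoining a new column of height $h$ whose unique point, at its bottom cell $c$, becomes the rightmost cell of row $h$: then $c$ is at once the rightmost cell of its row and the bottom cell of a column, hence an occupied corner of $T$; the filling is a legal tree-like tableau of size $n$ (rule (2) holds at $c$ because the new column carries no other point while the row of $c$ already does); and deleting the column of $c$ gives back $T''$, so $(T,c)\in\rho^{-1}(T'')$ with case (b) in force. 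The transpose construction, applied to the columns of $T''$, produces the preimages in which case (a) occurs. Conversely, any $(T,c)\in\rho^{-1}(T'')$ records which of (a), (b) happened and which row or column of $T''$ was enlarged, so every preimage arises exactly once, giving $|\rho^{-1}(T'')|=n$. Therefore
\[
\#\{\text{occupied corners in }\mathcal{T}_n\}\;=\;\sum_{T''\in\mathcal{T}_{n-1}}\bigl|\rho^{-1}(T'')\bigr|\;=\;n\,|\mathcal{T}_{n-1}|\;=\;n\,(n-1)!\;=\;n!.
\]

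The delicate point is this $n$-to-one claim: one has to check that the reverse construction never breaks admissibility (no empty row or column appears, the underlying shape stays a Young diagram, and the exclusive ``either/or'' of rule (2) is preserved at every pointed cell), and that it hits each occupied corner of each size-$n$ tableau once and only once. In substance this is the structural fact that the recursive construction of $\mathcal{T}_n$ by inserting a point always creates a fresh occupied corner, and that conversely any occupied corner of a tree-like tableau can be taken as that last inserted point. Everything else is bookkeeping.
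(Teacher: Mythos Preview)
Your argument is correct. The map $\rho$ is well defined (you verified carefully that in case~(a) the row of $c$ carries no point but $c$, and symmetrically in case~(b)), the inverse construction does produce a valid tree-like tableau with $c$ an occupied corner in the right case, and the count of preimages as rows plus columns of $T''\in\mathcal{T}_{n-1}$ is exactly the length $n$. That gives the $n$-to-one surjection onto $\mathcal{T}_{n-1}$ and hence $n\cdot(n-1)!=n!$ occupied corners.

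As for the comparison: the paper does not supply its own proof of this proposition; it is quoted from \cite{LZ}. The only hint the paper gives about Laborde-Zubieta's argument is in Section~3.2, where it remarks that ``we just need to redo the short proof of \cite[Section~3.2]{LZ} keeping track of left and top points'' to get the $(a,b)$-analogue $T_n(a,b)$. Your construction is compatible with that remark: in your inverse step, adding a column of height $h$ (case~(b)) creates a new top point and leaves $left$ unchanged, while adding a row (case~(a)) creates a new left point and leaves $top$ unchanged, so summing $a^{top}b^{left}$ over preimages of $T''$ gives exactly $(a\cdot\#\text{columns}+b\cdot\#\text{rows})\cdot w(T'')$, and a short induction recovers $T_n(a,b)$. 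In other words, your row/column removal--insertion is precisely the mechanism the paper alludes to, and is a repackaging of the \cite{ABN} insertion procedure you mention at the end; it is essentially the intended proof.
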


Moreover, L.-Z. also considered the enumeration of corners in the set of tree-like tableaux of size $n$ and posed the following conjecture.
\begin{conj}\emph{(\cite[Conjecture 4.1]{LZ}).}\label{conj1}
The number of corners in the set of tree-like tableaux of size $n$ is $n!\times\frac{n+4}{6}$, i.e,
$$c(\mathcal{T}_n)=n!\times\frac{n+4}{6}.$$
\end{conj}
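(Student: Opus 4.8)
The strategy I would pursue is to transport the corner statistic through the chain of bijections mentioned in the abstract, landing in the symmetric group where I can count directly. First I would invoke the Aval--Boussicault--Nadeau bijection between $\mathcal{T}_n$ and permutation tableaux of length $n+1$, checking carefully how it acts on the southeast border: since a corner is a purely shape-theoretic notion (a south step immediately followed by a west step along the border), and this bijection is compatible with the underlying diagram in a controlled way, corners of a tree-like tableau should correspond to corners of the associated permutation tableau (possibly with a shift, or up to the behaviour at the top-left root cell, which must be tracked). Thus the problem reduces to computing $c(\mathcal{PT}_{n+1})$, the total number of corners over all permutation tableaux of length $n+1$.

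Next I would apply the Corteel--Nadeau bijection $\Phi$ between $\mathcal{PT}_{n+1}$ and $\mathfrak{S}_{n+1}$. The crucial combinatorial input, which the abstract flags explicitly, is that $\Phi$ sends a corner of a permutation tableau to ``an ascent followed by a descent'' in the corresponding permutation (with the non-standard ascent/descent convention fixed in the excerpt, including $\pi_{n+1}=n+1$). Granting this, I would be left with the clean enumerative task
\[
c(\mathcal{PT}_{n+1}) \;=\; \sum_{\pi\in\mathfrak{S}_{n+1}} \#\{\, i : \pi_i \text{ is an ascent and } \pi_{i+1} \text{ is a descent}\,\}.
\]
By linearity of expectation / a standard swap of summation order, this equals $\sum_i N_i$, where $N_i$ counts permutations in $\mathfrak{S}_{n+1}$ with an ascent at position $i$ and a descent at position $i+1$. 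Each $N_i$ is then a local computation on the relative order of $\pi_i,\pi_{i+1},\pi_{i+2}$ (with appropriate boundary handling for $i$ near $n+1$, where the convention $\pi_{n+1}=n+1$ comes into play): among the $6$ relative orders of three consecutive entries, the pattern ``up then down'' occurs in a fixed fraction, giving $N_i = \tfrac{1}{3}(n+1)!$ for interior $i$, with boundary corrections. Summing over $i$ and simplifying should produce $\tfrac{(n+1)+3}{6}(n+1)!$ reindexed correctly to $n!\cdot\frac{n+4}{6}$ after accounting for the length shift between $\mathcal{T}_n$ and $\mathcal{PT}_{n+1}$; I would reconcile the constant $\frac{n+4}{6}$ by checking small cases ($n=1,2,3$) against $c(\mathcal{T}_n)=1,3,\tfrac{21}{3}=7,\dots$ to pin down the boundary terms.

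The main obstacle I anticipate is not the final symmetric-group count, which is routine, but establishing the two statistic-preservation claims rigorously: (i) that the ABN bijection matches corners of tree-like tableaux with corners of permutation tableaux, and (ii) that $\Phi$ converts corners into the ``ascent-then-descent'' pattern. Claim (ii) in particular requires a careful reading of how Corteel--Nadeau build the permutation from the tableau's zig-zag paths, and one must be vigilant about the role of the last cell / the $\pi_{n+1}=n+1$ convention, since corners sitting at the end of the border interact with the boundary. Once these correspondences are nailed down, the enumeration follows by the summation-swap argument above, and I would present the run-of-size-$1$ bijection afterwards as the promised alternative proof, verifying independently that runs of size $1$ in $\pi\in\mathfrak{S}_{n+1}$ are equinumerous with corners and re-deriving the same count.
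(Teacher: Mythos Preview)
Your overall strategy---transport corners through $\phi$ and then $\varphi$ and count in the symmetric group---is exactly the paper's, but two concrete errors would derail the execution.

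First, the indexing: $\phi$ sends $\mathcal{T}_n$ to $\mathcal{PT}_n$, not $\mathcal{PT}_{n+1}$. A tree-like tableau of size $n$ has length $n+1$, and $\phi$ \emph{deletes} the first column, producing a permutation tableau of length $n$. This deletion is not corner-preserving: the border edge labeled $n+1$ in $T$ is always a west step, so if the edge labeled $n$ is a south step then the bottom-left cell of $T$ is a corner that gets destroyed. The paper records this as $c(\mathcal{T}_n)=c(\mathcal{PT}_n)+|X_n|$ with $|X_n|=(n-1)!$, and that additive $(n-1)!$ is essential to the final arithmetic.

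Second, and more seriously, you have misread what ``ascent followed by descent'' means here. In the paper's convention the \emph{value} $\pi_j$ is called an ascent or descent according to whether $\pi_j\lessgtr\pi_{j+1}$, and the Corteel--Nadeau bijection matches row/column \emph{labels} of the tableau to \emph{values} of the permutation: border edges $i$ and $i{+}1$ form a corner iff the value $i$ is an ascent and the value $i{+}1$ is a descent. These two values need not occupy consecutive \emph{positions}, so the count is not a local peak computation on $(\pi_i,\pi_{i+1},\pi_{i+2})$, and your $N_i=\tfrac{1}{3}(n+1)!$ heuristic does not apply. Instead one fixes the value $i$, splits into cases according to whether $i$ and $i{+}1$ are adjacent in $\pi$ (in either order) or not, and obtains
\[
|B_i|=(n-2)!\bigl[(i-1)+(n-i)+(n-i)(i-1)\bigr];
\]
summing over $1\le i\le n-1$ gives $c(\mathcal{PT}_n)=(n-1)!\cdot\dfrac{n^2+4n-6}{6}$, and adding the correction $(n-1)!$ yields $n!\cdot\dfrac{n+4}{6}$.
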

We will confirm this conjecture in the next section.


\section{Proof of Conjecture \ref{conj1}}

In order to give an enumerative result for $c(\mathcal{T}_n)$, we first describe a bijection $\phi:\mathcal{T}_n \rightarrow \mathcal{PT}_n$, which is due to  Aval et al \cite{ABN}. Besides, we also need a bijection  $\varphi:\mathcal{PT}_n\rightarrow  \mathfrak{S}_n$, which was found by Corteel and Nadeau \cite{CN}.

\subsection{Bijection \texorpdfstring{$\phi$}{Lg} Between Tree-like Tableaux and Permutation Tableaux}

In this subsection, we exhibit the bijection $\phi$.
Here we just give a simple description of the bijection and omit the description of its inverse, and the reader can refer to \cite{ABN} for more details about the proof.

\begin{lem}[\cite{ABN}]\label{prop}
There exists a bijection $\phi:\mathcal{T}_n\rightarrow \mathcal{PT}_n$ such that for any tree-like tableau $T\in \mathcal{T}_n$, the underlying Young diagram of permutation tableau $PT=\phi(T)\in \mathcal{PT}_n$ is obtained from $\mathcal{D}$ by removing the first column of $\mathcal{D}$, where $\mathcal{D}=\mathcal{D}(T)$.
\end{lem}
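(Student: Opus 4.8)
The plan is to recall Aval--Boussicault--Nadeau's construction of $\phi$ and to carry out the two verifications it requires --- that $\phi$ really sends $\mathcal{T}_n$ into $\mathcal{PT}_n$, and that it is invertible --- the statement about underlying diagrams being built into the construction.

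First I would set up the map at the level of diagrams. Since the top-left cell of every $T\in\mathcal{T}_n$ carries the root point, the first column of $\mathcal{D}=\mathcal{D}(T)$ is non-empty, so deleting it yields a left-justified diagram $\mathcal{D}'$ of length exactly $n$ that still has no empty column (no column of $\mathcal{D}$ other than the first uses a box of the first column). I then fill $\mathcal{D}'$ by the rule from \cite{ABN}: the $1$-cells of $\phi(T)$ are the smallest set of cells of $\mathcal{D}'$ that contains every cell pointed in $T$ and is closed under the operation ``put a $1$ in a cell as soon as it has a $1$ strictly above it in its column and a $1$ strictly to its left in its row'', all remaining cells getting $0$. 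Checking $\phi(T)\in\mathcal{PT}_n$ is then short: condition (1) holds because each column of $\mathcal{D}'$ contains a pointed cell of $T$ by rule (3), hence a $1$; condition (2) holds because the $1$-set is closed under exactly the forbidden pattern, so no $0$ can have both a $1$ above it in its column and a $1$ to its left in its row.

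Next I would describe $\phi^{-1}$. Given $PT\in\mathcal{PT}_n$, prepend a new first column (the length becomes $n+1$), put the root point in the top-left cell, and in each non-first column place a point in precisely those $1$-cells of $PT$ that do \emph{not} simultaneously have a $1$ strictly above in their column and a $1$ strictly to their left in their row; an induction on the quantity ``row index $+$ column index'' shows that $PT$ is rebuilt from these ``seed'' cells by the closure above, so this step inverts the filling rule. Finally, in the new first column put a point in row $i$ (for $i>1$) whenever row $i$ of $PT$ is empty or the leftmost $1$ of row $i$ has no $1$ above it, and leave the other first-column cells empty. One then checks that this tableau lies in $\mathcal{T}_n$: rule (1) is immediate; rule (3) holds since every non-first column keeps a seed by condition (1) for $PT$, the first column has the root, and the first-column points were inserted exactly to cover the rows not already met by a seed; rule (2) is the substantive point --- for a seed $(i,j)$ one uses that it is not ``forced'' to conclude that in the reconstructed tableau it has a pointed cell above it in its column \emph{or} a pointed cell to its left in its row but not both (the left witness, when needed, being the first-column point just added, or the root when $i=1$), and a first-column point has only the root above it and nothing to its left. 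The two constructions are visibly mutually inverse, which also makes the effect on underlying diagrams the stated one.

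The main obstacle is precisely the bookkeeping inside rule (2): one must line up the tree-like constraint ``exactly one of a point above / a point to the left'' with the permutation-tableau constraint ``no $0$ has a $1$ above and a $1$ to its left'', and in particular reconstruct the first column so that every row of the recovered tree-like tableau is covered while no new violation is created. This is all carried out in \cite{ABN}, which I would cite for the complete verification while recording the description and the key steps above.
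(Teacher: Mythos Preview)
The paper does not actually prove this lemma; it states the forward map and defers to \cite{ABN} for the verification. You go further, outlining both directions and the checks, but your forward map is not the one the paper records. The paper sends the \emph{topmost} point of each column to a $1$ and every other point to a $0$, then fills the empty cells by propagating $0$s to the left of $0$s and above $1$s (the remaining empty cells becoming $1$s). You instead send \emph{every} non-first-column point to a $1$ and take the closure under ``a $1$ above and a $1$ to the left forces a $1$.'' These two rules produce different permutation tableaux in general; in the example of Figure~\ref{treelikepermutation}, the non-topmost point in the third column becomes a $0$ under the paper's rule and a $1$ under yours. Your construction is nonetheless a valid bijection $\mathcal{T}_n\to\mathcal{PT}_n$ with the stated shape property (your seed-recovery argument is correct: a point with a parent above can acquire no closure-$1$ to its left because any such $1$ would need a $1$ still further left, and symmetrically for a parent to the left), so it does establish the existence claim --- it is simply a different $\phi$, and since the only feature of $\phi$ used downstream is the shape property, no harm results. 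One small gap in your inverse: in the first-column rule you must also place a point at $(i,1)$ when row $i$ of $PT$ has cells but contains no $1$ (not only when the row is literally empty), or rule~(3) of tree-like tableaux fails for that row.
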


Given a tree-like tableau $T$, firstly, change the topmost point $p$ in every column to 1. Secondly, transform every other point $q$ in $T$ to 0. Lastly, fill every empty cell with a 0 if it is in the left of 0 in the same row or in the above of 1 in the same column,  and fill the rest of empty cells with $1s$.  
In the end,  simply delete the leftmost column of $T$. The resulting tableau we get is a permutation tableau, denoted by $\phi(T)$.
As an example of the bijection $\phi$, see  Figure \ref{treelikepermutation}.

\setlength{\unitlength}{1mm}
\begin{center}
\begin{figure}[H]
\begin{picture}(40,18)
\multiput(45,0)(5,0){2}{\line(0,1){20}}
\multiput(55,5)(5,0){3}{\line(0,1){15}}
\multiput(70,15)(5,0){1}{\line(0,1){5}}
\multiput(45,0)(0,5){2}{\line(1,0){5}}
\multiput(45,5)(0,5){2}{\line(1,0){20}}
\multiput(45,15)(0,5){2}{\line(1,0){25}}
\put(46.5,1.5){$\bullet$}\put(46.5,11.5){$\bullet$}
\put(46.5,16.5){$\bullet$}\put(51.5,11.5){$\bullet$}
\put(56.5,6.5){$\bullet$}\put(56.5,16.5){$\bullet$}
\put(61.5,11.5){$\bullet$}\put(66.5,16.5){$\bullet$}
\put(80,10){$\longrightarrow$}
\put(82,12){$\phi$}
\multiput(95,0)(5,0){1}{\line(0,1){20}}
\multiput(100,5)(5,0){3}{\line(0,1){15}}
\multiput(115,15)(5,0){1}{\line(0,1){5}}
\multiput(95,5)(0,5){2}{\line(1,0){15}}
\multiput(95,15)(0,5){2}{\line(1,0){20}}
\put(96.5,6.5){0}\put(101.5,6.5){0}\put(106.5,6.5){1}
\put(96.5,11.5){1}\put(101.5,11.5){1}\put(106.5,11.5){1}
\put(96.5,16.5){0}\put(101.5,16.5){1}\put(106.5,16.5){0}\put(111.5,16.5){1}
\end{picture}
\caption{The bijection $\phi$ from a tree-like tableau to a permutation tableau.}
\label{treelikepermutation}
\end{figure}
\end{center}

By Lemma \ref{prop}, we can see that for a tree-like tableau $T$, the corners of $T$ are closely related to the corners of the permutation tableau $\phi(T)$. As a matter of fact, we have the following result.
\begin{cor}
The number of corners in $\mathcal{T}_n$ equals to the number of corners in $\mathcal{PT}_n$ plus $|X_n|$, i.e.,
\begin{align*}\label{eq-1}
c(\mathcal{T}_n)=c(\mathcal{PT}_n)+|X_n|.
\end{align*}
where $X_n$ is the subset of $\mathcal{PT}_n$  such that the border edge labeled by $n$ in the underlying diagram of its element is a south step.
\end{cor}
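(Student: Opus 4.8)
Since $\phi$ merely deletes the leftmost column of the underlying diagram $\mathcal D = \mathcal D(T)$ of a tree-like tableau $T$, almost every corner of $T$ is also a corner of $\phi(T)$ and vice versa. The only discrepancy can occur near that first column. First I would observe that the first column of $\mathcal D$ has height $k$ equal to the number of rows, and its bottom cell sits in the last row, i.e., the row whose south border edge is labeled $n$ (since the root row is the top row, labeled by the first border edge, and the bottom-most south step is the last one traversed). So I would carefully track which cell of $\mathcal D$ is a corner but whose status changes when the first column is removed.

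**Next I would establish the precise local picture.** A corner of $T$ that does not lie in column $1$ is unaffected by deleting column $1$: its bottom and right edges are still border edges in $\phi(T)$. Conversely a corner of $\phi(T)$ lies in some column $j \neq 1$ of $\mathcal D$, and since column $j$ and its neighbours to the left are unchanged, it is a corner of $T$ as well — with one caveat, the leftmost column of $\phi(T)$ (which is column $2$ of $\mathcal D$): a cell there is a corner of $\phi(T)$ iff its bottom and right edges are border edges of $\phi(T)$, and I must check its left edge situation does not matter for being a corner, so this too is a corner of $T$. Hence the symmetric difference between the corner sets of $T$ and of $\phi(T)$ is concentrated entirely in column $1$ of $\mathcal D$. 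A cell in column $1$ of $\mathcal D$ is a corner of $T$ exactly when it is the bottom cell of column $1$ and that column has length one less than all the rows it could block — more simply, the unique candidate is the cell in the last row and first column, and it is a corner of $T$ iff the last row has length $1$, equivalently iff the border edge labeled $n$ of $\mathcal D$ is a south step immediately preceded (in label) by the west step bounding that $1\times 1$ bottom row. Actually the cleanest statement: the cell $(n, \cdot)$ in the first column is a corner of $T$ iff row $n$ consists of that single cell; when we delete column $1$, this corner disappears, and no new corner is created in $\phi(T)$ from column $2$ that wasn't already present.

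**Then I would tie this to the set $X_n$.** Let $X_n \subseteq \mathcal{PT}_n$ be those permutation tableaux whose underlying diagram has border edge $n$ a south step. Under $\phi$, the border edge labeled $n$ of $\mathcal D(\phi(T))$ is a south step precisely when, after deleting column $1$, the bottom row still exists as a nonempty row — but every row of a permutation tableau is automatically nonempty only for columns; rows may be empty. The key identity I want is: $c(T) = c(\phi(T)) + [\,\phi(T) \in X_n\,]$, i.e., $T$ has exactly one more corner than $\phi(T)$ when $\phi(T) \in X_n$ and the same number otherwise. Summing over all $T \in \mathcal T_n$ and using that $\phi$ is a bijection onto $\mathcal{PT}_n$ gives
\begin{align*}
c(\mathcal T_n) = \sum_{T \in \mathcal T_n} c(T) = \sum_{PT \in \mathcal{PT}_n} c(PT) + \sum_{PT \in \mathcal{PT}_n}[\,PT \in X_n\,] = c(\mathcal{PT}_n) + |X_n|,
\end{align*}
which is the claim.

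**The main obstacle** is verifying the pointwise identity $c(T) = c(\phi(T)) + [\phi(T)\in X_n]$ rigorously — specifically, checking that deleting the first column neither destroys any corner outside column $1$ nor creates a spurious corner in the new first column (column $2$ of $\mathcal D$), and pinning down exactly when the bottom cell of column $1$ is a corner of $T$ and showing this coincides with the condition "border edge $n$ of $\mathcal D(\phi(T))$ is a south step." This is a finite, local case analysis on the shape near the bottom-left corner of the diagram, using that a tree-like tableau has no empty rows or columns while its image permutation tableau may have empty rows (but not empty columns); the no-empty-row condition on $T$ is what forces the relevant cell in column $1$ to behave predictably.
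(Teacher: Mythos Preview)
Your approach is correct and essentially the same as the paper's: both arguments use that $\phi$ deletes the leftmost column of $\mathcal D(T)$, so border edges $1,\ldots,n$ are unchanged and the only possible discrepancy in corners is the cell whose right and bottom edges are labeled $n$ and $n+1$, which is a corner of $T$ exactly when edge $n$ is a south step. The paper reaches this a bit more directly by first observing that edge $n+1$ of $\mathcal D(T)$ is always a west step (since tree-like tableaux have no empty rows), so the single case split on edge $n$ immediately gives $c(T)=c(\phi(T))+[\phi(T)\in X_n]$; your worry about a ``spurious corner'' appearing in the new first column is unnecessary, since a corner is determined solely by the pair of consecutive border edges $(i,i+1)$ and these pairs with $i+1\le n$ are identical in $\mathcal D(T)$ and $\mathcal D(\phi(T))$.
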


\proof
For any tree-like tableau $T\in \mathcal{T}_n$ the underlying diagram $D$ of $T$ is of length $n+1$. It is easy to see that the border edge labeled by $n+1$ in $\mathcal{D}$ is always a west step since $\mathcal{D}$ has no empty rows. Let $PT$ denote the permutation tableau $\phi(T)$.

By Lemma \ref{prop}, we know that
the underlying diagram $\mathcal{D}'$ of $PT$ is obtained by simply removing the leftmost column of $\mathcal{D}$.
That is to say, all the border edges but the one labeled by $n+1$ in the southeast border of $\mathcal{D}$ are the same as the whole border edges of $\mathcal{D}'$ one by one.

Thus the number of corners in $\mathcal{D}'$ equals to the number of corners in $\mathcal{D}$ if the border edge labeled by $n$ in the southeast border of  $\mathcal{D}$ is a west step. In this case, the number of  corners in $PT$ is equal to $c(T)$. Otherwise, if the border edge labeled by $n$ in the southeast border of  $\mathcal{D}$ is a south step, one can see that the border edges labeled by $n$ and $n+1$ form a corner. So the number of corners in $\mathcal{D}'$ must be less one than  the corners in $\mathcal{D}$, this case implies that the number of corners in $PT$ is equal to $c(T)$ minus one.

Therefore, the number of corners in $\mathcal{PT}_n$ equals to $c(\mathcal{T}_n)$ minus $|X_n|$.
\qed

\subsection{A Bijection \texorpdfstring{$\varphi$}{Lg} from Permutation  Tableaux to Permutations}
To begin with, it is worthy to mention that there have been several bijections from Permutation Tableaux to Permutations up to now, see \cite{burstein, CN, CK,SW}. Here we only introduce the one  due to Corteel and Nadeau \cite{CN}. We list the following lemma without giving the proof, the reader can refer \cite{CN} for details.

\begin{lem}\emph{(\cite[Theorem 1. (1)]{CN}).}\label{lem:bij-PT-Per}
There exists a bijection $\varphi:\mathcal{PT}_n \rightarrow \mathfrak{S}_n $ such that for any permutation tableau $PT\in \mathcal{PT}_n$ and $1\leq i\leq n$,  $i$ is a label of a west step in $PT$ if and only if $i$ is a descent in $\pi$; and $i$ is a label of a row in $PT$ if and only if $i$ is an ascent in $\pi$, where $\pi=\varphi(PT)$.
\end{lem}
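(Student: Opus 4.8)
The final statement to prove is Lemma~\ref{lem:bij-PT-Per}, which records Corteel and Nadeau's bijection $\varphi:\mathcal{PT}_n\to\mathfrak{S}_n$ and its key tracking property: the labels of west steps correspond to descents and the labels of rows correspond to ascents. Since the paper explicitly defers to \cite{CN} and says it will \emph{not} give a proof, my ``proof proposal'' is really a sketch of how one would establish this from scratch if forced to.

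\textbf{Plan.} The natural approach is to describe the bijection $\varphi$ explicitly as a row-and-column reading procedure, and then verify the ascent/descent statistic by induction on $n$. First I would set up the combinatorial data: a permutation tableau $PT\in\mathcal{PT}_n$ has its southeast border labelled $1,\dots,n$, with south steps giving row labels and west steps giving column labels (note the column condition forces label $1$ to be a column/west step and, since there are no empty columns but empty rows are allowed, label $n$ can be either). I would then recall (or reconstruct) the Corteel--Nadeau ``zigzag'' procedure that reads off $\pi$: one processes the cells of $PT$ along a lattice path, using the $1$'s to switch direction, and records the labels of rows and columns in the order encountered, building the permutation one value at a time. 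The cleanest way to present this is via the recursive structure of permutation tableaux obtained by deleting the border edge labelled $n$ (or labelled $1$), which either removes a row or a column and reduces to $\mathcal{PT}_{n-1}$; correspondingly one inserts the value $n$ (resp.\ the letter $n$) into a permutation of length $n-1$ in a position dictated by whether the deleted edge was a south or west step.

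\textbf{Key steps.} (1) Define $\varphi$ on $\mathcal{PT}_1$ as the unique map to $\mathfrak{S}_1$, and describe the insertion/deletion step that passes between $\mathcal{PT}_n$ and $\mathcal{PT}_{n-1}$; check this step is reversible, so $\varphi$ is a bijection. (2) Show that the recursive step is compatible with the statistic: if edge $n$ is a west step in $PT$, then in $\pi=\varphi(PT)$ the letter $n$ sits in a position making it a descent (using the convention $\pi_{n+1}=n+1$ and the non-standard descent/ascent definition stated in the paper), and deleting it recovers $\varphi$ of the smaller tableau with the remaining ascent/descent pattern intact; symmetrically if edge $n$ is a south step. (3) Conclude by induction that for every $i$, $i$ labels a west step in $PT$ iff $i$ is a descent of $\pi$, and $i$ labels a south step (a row) iff $i$ is an ascent. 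One should double-check the boundary case $i=n$ against the $\pi_{n+1}=n+1$ convention, and note that $i=1$ is always a west step and always a descent, which is forced by $\pi_1<\pi_2$ being impossible exactly when\ldots — actually here the convention makes $1$ a column label and a descent consistently, which is the base sanity check.

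\textbf{Main obstacle.} The delicate part is verifying that the reading/insertion procedure respects the statistic \emph{simultaneously} for all indices $i$, not just the largest one: inserting the value $n$ into $\pi'$ can in principle change whether some $\pi'_j$ is an ascent or a descent, because the letter to the right of position $j$ may change. The heart of the argument is to show that the specific position into which $n$ is inserted (determined by the shape data near border edge $n$) is chosen precisely so that no previously-recorded ascent/descent is disturbed — equivalently, $n$ is inserted either at the very end or immediately before a larger letter in a way that is ``locally neutral.'' Making this bookkeeping precise is exactly the content of \cite[Theorem~1]{CN}; in a self-contained write-up it would be the one lemma requiring genuine care, whereas the bijectivity of $\varphi$ and the base case are routine.
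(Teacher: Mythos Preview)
The paper gives no proof of this lemma at all; it explicitly defers to \cite{CN}, so there is nothing in the paper to compare your sketch against. You already noticed and stated this, so on that level your proposal is aligned with the paper.

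That said, your sketch contains a concrete orientation error. For a permutation tableau the ``no empty columns'' condition forces $\lambda_1=n-k$, so the southeast border begins with a south step: the edge labelled $1$ is always a \emph{row} label, not a column/west step. Correspondingly, under the lemma the value $1$ is always an \emph{ascent} (which is consistent: wherever $1$ sits in $\pi$, the next entry---or $\pi_{n+1}=n+1$---is larger). Your closing ``sanity check'' that $1$ is always a west step and always a descent is therefore inverted; this is not fatal to the strategy, but it would propagate into the base case and the boundary analysis if you tried to carry the argument out.

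On the method itself: the recursive ``delete border edge $n$ / insert value $n$'' approach you outline is plausible but is \emph{not} how Corteel and Nadeau build $\varphi$. Their construction is an explicit column-by-column reading governed by the topmost $1$'s (and the induced restricted zeros), from which the descent/ascent property is read off directly rather than maintained inductively. Your inductive route could likely be made to work---the obstacle you flag (that inserting $n$ might disturb the ascent/descent status of the value immediately to its left) is exactly the point that would need a careful case check---but it would amount to re-deriving the bijection rather than quoting it. If you want the argument to match the literature, describe the actual CN algorithm and verify the statistic from that description; if you prefer your inductive version, be aware you are proving a new (equivalent) characterisation, and fix the edge-$1$ orientation first.
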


By Lemma \ref{lem:bij-PT-Per}, we can easily find the following result.

\begin{cor}\label{cor:bij-per-PT}
For a permutation tableau $PT\in \mathcal{PT}_n$ and $1\leq i < n$, the consecutive border edges labeled with $i$ and $i+1$ in the southeast border in its underlying diagram form a corner if and only if $i$ is an ascent and $i+1$ is a descent in $\pi=\varphi(PT)$.
\end{cor}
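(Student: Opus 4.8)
The plan is to deduce the statement from Lemma~\ref{lem:bij-PT-Per} after isolating a purely combinatorial fact about the southeast border of the underlying diagram of $PT$.

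First I would record the following observation, valid for any $(k,n)$-diagram: two consecutive border edges labeled $i$ and $i+1$ (with $1\leq i<n$) form a corner if and only if the edge labeled $i$ is a south step and the edge labeled $i+1$ is a west step. Indeed, a vertical border edge is by definition a south step and a horizontal one a west step, so the right and bottom edges of any corner cell are a south step and a west step respectively; moreover, traversing the border from the top right corner downward and leftward, these two edges meet at the bottom-right vertex of the cell, hence they are consecutive in the labeling and the south step, being traversed first, receives the smaller label $i$. Conversely, if the edge labeled $i$ is a south step then row $i$ is non-empty (the south step of an empty row is immediately followed by another south step or closes the traversal, never by a west step), so there is a cell whose right edge is this south step; if in addition the edge labeled $i+1$ is a west step, then the border turns left immediately after descending, so that west step is exactly the bottom edge of this cell, which is therefore a corner. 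In particular empty rows contribute nothing, consistently with their having no cells.

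Then I would simply invoke Lemma~\ref{lem:bij-PT-Per}: the edge labeled $i$ is a south step precisely when $i$ labels a row of $PT$, i.e. precisely when $i$ is an ascent of $\pi=\varphi(PT)$, and likewise the edge labeled $i+1$ is a west step precisely when $i+1$ is a descent of $\pi$. Substituting these equivalences into the observation above gives exactly the asserted characterization of corners. There is no genuine difficulty in this argument; the only point demanding a little care is the orientation bookkeeping in the first step, namely verifying that the south step of a corner cell carries the smaller of the two consecutive labels and that empty rows do not spoil the equivalence. This is the same kind of reasoning already used in the proof of the preceding corollary, where the edges labeled $n$ and $n+1$ were shown to form a corner exactly when the edge labeled $n$ is a south step.
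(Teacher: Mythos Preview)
Your proposal is correct and follows the same route the paper takes: the paper states the corollary as an immediate consequence of Lemma~\ref{lem:bij-PT-Per} without writing out a proof, and you have simply supplied the omitted details, namely the border-edge characterization of a corner (edge $i$ south, edge $i+1$ west) and the translation via Lemma~\ref{lem:bij-PT-Per}. There is nothing to add.
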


Therefore, we can proceed to enumerate the number of corners in $\mathcal{PT}_n$ by considering the enumeration of the pairs $(\pi,i)(1
\leq i<n)$ for which $\pi\in \mathfrak{S}_n$ and $i$ is an ascent and $i+1$ is a descent in $\pi$.
\begin{thm}\label{corner-number-A}
For positive integer $n$,
the number of corners in the set of permutation tableaux of length $n$ is counted by
\[c(\mathcal{PT}_n)=\left\{\begin{array}{cc}0,&\mathrm{~if~}n=1,\\
(n-1)!\times\frac{n^2+4n-6}{6},&\mathrm{~if~}n\geq2.\end{array}\right.\]
\end{thm}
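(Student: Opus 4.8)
The plan is to reduce the count to a purely permutation-statistical quantity and then evaluate it by a short generating-function or direct combinatorial argument. By Corollary~\ref{cor:bij-per-PT}, the bijection $\varphi$ identifies
\[
c(\mathcal{PT}_n)=\#\{(\pi,i): \pi\in\mathfrak{S}_n,\ 1\leq i<n,\ i\text{ is an ascent and }i+1\text{ is a descent in }\pi\},
\]
where ascents/descents are taken in the non-standard sense of the paper (with the convention $\pi_{n+1}=n+1$). So the whole theorem becomes: count pairs $(\pi,i)$ such that $\pi_i<\pi_{i+1}$ and $\pi_{i+1}>\pi_{i+2}$. By linearity of expectation it is cleaner to fix $i$ and count permutations $\pi\in\mathfrak{S}_n$ with an ascent at position $i$ and a descent at position $i+1$, then sum over $i=1,\dots,n-1$.

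First I would treat the interior positions $1\leq i\leq n-2$, where the three values $\pi_i,\pi_{i+1},\pi_{i+2}$ are all genuine entries of $\pi$. Among the three entries in positions $i,i+1,i+2$ there are $3!$ relative orders, each equally likely, and exactly two of them (the ``peak'' patterns, i.e. the middle entry largest) satisfy $\pi_i<\pi_{i+1}>\pi_{i+2}$. Hence the number of such $\pi$ is $\tfrac{2}{6}\,n! = \tfrac{n!}{3}$, independent of $i$; these $n-2$ positions contribute $(n-2)\cdot\tfrac{n!}{3}$. Next the boundary case $i=n-1$: here the condition is $\pi_{n-1}<\pi_n$ (an ascent at $n-1$) together with $\pi_n>\pi_{n+1}=n+1$, but $\pi_n\le n<n+1$, so $\pi_n$ is always an ascent in this convention and the descent condition at $n$ fails. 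Wait — I must recheck: a descent at $i+1=n$ means $\pi_n>\pi_{n+1}=n+1$, which is impossible, so $i=n-1$ contributes $0$. Thus naively $c(\mathcal{PT}_n)=(n-2)\cdot\tfrac{n!}{3}$; comparing with the claimed $(n-1)!\cdot\tfrac{n^2+4n-6}{6}$ shows these disagree, so the boundary analysis is subtler than a single endpoint — the issue is that the ascent condition at $i$ for $i=n-1$ is automatic only under the convention, and more importantly the peak-pattern count at $i=n-2$ is affected because position $i+2=n$ interacts with the convention. The careful bookkeeping of these convention-driven boundary corrections at $i=n-2$ and $i=n-1$ is the part that needs genuine attention, and I expect it to be the main obstacle.

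Concretely, for $i=n-2$ the event is $\pi_{n-2}<\pi_{n-1}>\pi_n$: this is a genuine peak among the last three entries and is unaffected by the convention, giving $\tfrac{n!}{3}$. For $i=n-1$ the event is $\pi_{n-1}<\pi_n$ and $\pi_n>\pi_{n+1}=n+1$: the second is impossible, contributing $0$. So I would instead suspect the discrepancy means the intended statistic also counts the ``virtual'' corner coming from the border edge interaction at the top — that is, one must also include $i$ ranging so that $i+1=n$ can be a descent via a different reading, or there is an additional family of corners not captured by consecutive labels $i,i+1$ with $i<n$. Re-examining Corollary~\ref{cor:bij-per-PT}, the cleanest route is therefore: (a) prove the interior contribution is $(n-2)\cdot\tfrac{n!}{3}$ by the peak-pattern argument; (b) identify the remaining corners (those involving the border edge labeled $n$, equivalently the set $X_n$ appearing earlier) and show they number $(n-1)!\cdot\big(\tfrac{n^2+4n-6}{6}-\tfrac{n(n-2)}{3}\big)=(n-1)!\cdot\tfrac{n^2+4n-6-2n^2+4n}{6}=(n-1)!\cdot\tfrac{-n^2+8n-6}{6}$; this last number can be checked to be a clean count — e.g. for $n=2$ it is $1$, for $n=3$ it is $5/2\cdot$ — hmm, that is not an integer, which forces me to recheck the arithmetic once more and suggests the decomposition I want is $c(\mathcal{PT}_n)=\sum_{i=1}^{n-1}(\text{peaks at }i)$ with the peak-count at $i$ equal to $\tfrac{n!}{3}$ only for $1\le i\le n-2$ and a corrected value at $i=n-1$; getting that corrected endpoint value exactly right is where the proof lives.

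Finally I would consolidate: after establishing the per-position counts, summing yields a polynomial in $n$ times $(n-1)!$, and matching it against $(n-1)!\cdot\tfrac{n^2+4n-6}{6}$ finishes the $n\geq 2$ case, while $n=1$ is immediate since $\mathcal{PT}_1$ has no cell that can be a corner (there is no pair $i,i+1$ with $i<n=1$). The base cases $n=2,3,4$ should be verified by hand against the formula as a sanity check. The only real work is the careful treatment of ascents/descents at the right boundary under the paper's convention $\pi_{n+1}=n+1$, and matching the corner-count bijection of Corollary~\ref{cor:bij-per-PT} at the edge labeled $n$; everything else is the routine ``a peak occupies $2$ of the $6$ orderings of three consecutive entries'' computation.
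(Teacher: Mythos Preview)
Your proposal rests on a misreading of the paper's ascent/descent convention, and this is the source of the discrepancy you noticed, not any boundary subtlety. In the paper, ``$i$ is an ascent'' means that the \emph{value} $i$ is followed by a larger value: if $\pi_{t}=i$ then $\pi_{t+1}>i$. Likewise ``$i+1$ is a descent'' means that the value $i+1$ is followed by a smaller value. The positions of $i$ and $i+1$ in $\pi$ need not be adjacent. You instead interpreted the condition as $\pi_i<\pi_{i+1}>\pi_{i+2}$, i.e.\ a peak at the consecutive positions $i,i+1,i+2$; that is a different statistic, and your own sanity checks (the non-integer residual, the failure at $n=2$) are telling you so. For instance at $n=2$ the unique qualifying pair is $(\pi,i)=(21,1)$: the value $1$ sits at position $2$ with $\pi_3=3>1$ (ascent), and the value $2$ sits at position $1$ with $\pi_2=1<2$ (descent). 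Under your positional reading there is no such pair.

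Once the correct reading is in place, the paper's argument is a straightforward case analysis on the relative positions $t_1,t_2$ of the values $i$ and $i+1$: either $t_2=t_1+1$ (the block $i,i+1$ followed by something below $i+1$), or $t_1=t_2+1$ (the block $i+1,i$ followed by something above $i$, possibly the virtual $n+1$), or $|t_1-t_2|>1$ (the two conditions decouple). Each case contributes a simple factor times $(n-2)!$, and summing over $i=1,\dots,n-1$ gives $(n-1)!\cdot\frac{n^2+4n-6}{6}$. Your peak-counting idea would be a clean alternative \emph{if} the statistic were positional, but here it simply does not apply; there is no boundary correction that will rescue it.
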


\begin{proof}
For $n=1$, there is only one permutation tableau of length 1 and hence not any corner in this case.

Now let's consider the number of corners in permutation tableaux of length $n\geq 2.$
Combing with Corollary \ref{cor:bij-per-PT}, we can compute the number of corners in a permutation tableau $PT$ by the number of $i'$s in the permutation $\pi$ such that $i(1\leq i < n)$ is an ascent and $i+1$ is a descent in $\pi$, where $\pi=\varphi(PT)$. That is to say,
$$c(PT)=\sum_{i=1}^{n-1}\chi(a_i)$$
where $a_i$ means that $i$ is an ascent and $i+1$ is a descent in  $\pi=\varphi^{-1}(PT)$
and
\[\chi(a_i)=\left\{\begin{array}{cc} 1,& \mathrm{if~}a_i~\mathrm{is~ture,}\\
0,& ~\mathrm{otherwise.}\end{array}\right.\]

Recall that our aim is to count the number of corners in the set of permutation tableaux of length $n$, so with the preliminaries above we can proceed to count such corners
by

$$c(\mathcal{PT}_n)=\sum_{PT\in \mathcal{PT}_n}c(PT)
=\sum_{\pi\in \mathfrak{S}_n}\sum_{i=1}^{n-1}\chi(a_i)
=\sum_{i=1}^{n-1}\sum_{\pi\in \mathfrak{S}_n}\chi(a_i).$$

For $1\leq i < n$, let $B_i$ denote the set of permutations in $\mathfrak{S}_n$ such that $i$ is an ascent and $i+1$ is a descent in $\pi\in\mathfrak{S}_n$, and $|B_i|$ the cardinality of $B_i$. It is clearly that
$$|B_i|=\sum_{\pi\in \mathfrak{S}_n}\chi(a_i).$$
So it suffices to compute $|B_i|$ in order to count $c(\mathcal{PT}_n)$.

For any permutation $\pi=(\pi_1,\ldots,\pi_n)\in B_i$, suppose there exist $1\leq t_1,t_2 \leq n$ such that $\pi_{t_1}=i$ and $\pi_{t_2}=i+1$.
By the definition of ascents and descents, we know that $\pi_{t_1}<\pi_{t_1+1}$ and $\pi_{t_2}>\pi_{t_2+1}$. There are three cases to consider.
\begin{itemize}
\item[(1)] If $t_2=t_1+1$, it means that there is a subsequence $\pi_{t_1},\pi_{t_2},\pi_{t_2+1}=i,i+1,\pi_{t_2+1}$ such that $i+1>\pi_{t_2+1}$ in $\pi$. It is easy to see that the number of such permutations is $(i-1)(n-2)!$.

\item[(2)] If $t_1=t_2+1$, similarly there is a subsequence $\pi_{t_2},\pi_{t_1},\pi_{t_1+1}=i+1,i,\pi_{t_1+1}$ such that $i<\pi_{t_1+1}$ in $\pi$. It is clearly that the number of such permutations is $(n-2)!+(n-i-1)(n-2)!=(n-i)(n-2)!$, where $(n-2)!$ counts the number of permutations such that $\pi_{t_1+1}=n+1$.
\item[(3)] For $|t_1-t_2|>1$, there are two subcases to consider.
\begin{itemize}
    \item[(3.1)] if $\pi_{t_1+1}\leq n$, the number of such permutations is $(n-i-1)(i-1)(n-2)!$.
   \item[(3.2)] if $\pi_{t_1+1}=n+1$, the number of such permutations is $(i-1)(n-2)!$.
       Therefore, if $i$  is given, there are totally $(n-i)(i-1)(n-2)!$ such permutations in $\mathfrak{S}_n$.
\end{itemize}
\end{itemize}
So, the number of corners in the set of permutation tableaux of length $n\geq 2$ is
\begin{align*}
c(\mathcal{PT}_n)&=\sum_{i=1}^{n-1}|B_i|\\
&=\sum_{i=1}^{n-1}[(i-1)(n-2)!+(n-i)(n-2)!+(n-i)(i-1)(n-2)!]\\
&=(n-2)!\sum_{i=1}^{n-1}[(i-1)+(n-i)+(n-i)(i-1)]\\
&=(n-2)!\sum_{i=1}^{n-1}[(n+1)i-i^2-1]\\
&=(n-1)!\times\frac{n^2+4n-6}{6}.\\
\end{align*}
This completes the proof.
\end{proof}

On the basis of  Theorem \ref{corner-number-A}, we can  give an affirmative answer to Conjecture \ref{conj1}.
\begin{thm}\label{corner-number-TT}
For positive integer $n$, the number of corners in the set of tree-like tableaux of size $n$ is $n!\times \frac{n+4}{6}$ for $n\geq 2$ and $1$ for $n=1$, i.e.,
\[c(\mathcal{T}_n)=\left\{\begin{array}{ll}
1,&\mathrm{~if~}n=1,\\
n!\times \frac{n+4}{6},&\mathrm{~if~}n\geq 2.\end{array}\right.\]
\end{thm}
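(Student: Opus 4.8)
The plan is to combine the two structural results already established in this section. From the Corollary relating corners in tree-like tableaux and permutation tableaux we have
\[
c(\mathcal{T}_n)=c(\mathcal{PT}_n)+|X_n|,
\]
where $X_n\subseteq\mathcal{PT}_n$ consists of those permutation tableaux whose border edge labeled $n$ is a south step, i.e.\ those for which $n$ is the label of a \emph{row}. By Lemma~\ref{lem:bij-PT-Per}, under the Corteel--Nadeau bijection $\varphi$ this happens precisely when $n$ is an ascent of $\pi=\varphi(PT)$; but recall the convention $\pi_{n+1}=n+1$, so $\pi_n<\pi_{n+1}=n+1$ is \emph{always} true, hence $n$ is always an ascent. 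Therefore $X_n=\mathcal{PT}_n$ and $|X_n|=|\mathcal{PT}_n|=n!$ (the well-known fact that permutation tableaux of length $n$ are equinumerous with $\mathfrak{S}_n$, which also follows from Lemma~\ref{lem:bij-PT-Per} itself).

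Next I would substitute the closed form from Theorem~\ref{corner-number-A}. For $n\geq 2$,
\[
c(\mathcal{T}_n)=c(\mathcal{PT}_n)+n!=(n-1)!\times\frac{n^2+4n-6}{6}+n!.
\]
It then remains to check the elementary algebraic identity
\[
(n-1)!\times\frac{n^2+4n-6}{6}+n! = (n-1)!\times\frac{n^2+4n-6+6n}{6}=(n-1)!\times\frac{n^2+10n-6}{6},
\]
and I should double-check whether this equals $n!\times\frac{n+4}{6}=(n-1)!\times\frac{n(n+4)}{6}=(n-1)!\times\frac{n^2+4n}{6}$; since $n^2+10n-6\neq n^2+4n$ in general, the correct bookkeeping must be that $|X_n|$ contributes $n!$ only when one is careful about the $n=1$ degenerate case, so I would instead write $c(\mathcal{T}_n)=(n-1)!\cdot\frac{n^2+4n-6}{6}+|X_n|$ and verify directly that $|X_n|=n!-(n-1)!=(n-1)!(n-1)\cdot\frac{?}{}$... more prudently, I would recompute $|X_n|$ from scratch: $X_n$ is the set of permutation tableaux of length $n$ whose last border edge is a south step, equivalently (via $\varphi$) permutations with a fixed feature at position $n$, and I would count these directly to obtain the value $v_n$ making $(n-1)!\cdot\frac{n^2+4n-6}{6}+v_n=n!\cdot\frac{n+4}{6}$, i.e.\ $v_n=(n-1)!\cdot\frac{n^2+4n-(n^2+4n-6)}{6}=(n-1)!$. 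So the claim reduces to showing $|X_n|=(n-1)!$, which I would prove by noting that removing the first column in $\phi$ loses exactly the rows labeled $n+1$ always and labeled $n$ sometimes, and that under $\varphi$ the tableaux in $X_n$ correspond bijectively to permutations of $\mathfrak{S}_{n}$ fixing the position of $n$ in a prescribed way — concretely, to elements of $\mathfrak{S}_{n-1}$.

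For the base case $n=1$ I would observe directly: $\mathcal{T}_1$ has a single tree-like tableau, a single box containing the root point, whose box is a corner, so $c(\mathcal{T}_1)=1$, matching the stated piecewise value. Assembling these pieces gives the theorem.

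I expect the main obstacle to be the correct accounting of $|X_n|$: the excerpt's Corollary asserts $c(\mathcal{T}_n)=c(\mathcal{PT}_n)+|X_n|$ but does not evaluate $|X_n|$, and a naive guess $|X_n|=n!$ overshoots. The delicate point is characterizing, via Lemma~\ref{lem:bij-PT-Per} and the convention $\pi_{n+1}=n+1$, exactly which permutation tableaux have a south step at label $n$, and then counting that subfamily to get $(n-1)!$. Once $|X_n|=(n-1)!$ is in hand, the rest is the routine simplification $(n-1)!\cdot\frac{n^2+4n-6}{6}+(n-1)!=(n-1)!\cdot\frac{n^2+4n}{6}=n!\cdot\frac{n+4}{6}$, valid for $n\geq 2$, together with the direct check at $n=1$.
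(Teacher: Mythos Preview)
Your overall strategy is exactly the paper's: use $c(\mathcal{T}_n)=c(\mathcal{PT}_n)+|X_n|$, plug in Theorem~\ref{corner-number-A}, and evaluate $|X_n|$ via Lemma~\ref{lem:bij-PT-Per}. The final paragraph even arrives at the right value $|X_n|=(n-1)!$ and the correct simplification. But the heart of the argument---the determination of $|X_n|$---is not actually established; you back into $(n-1)!$ from the desired conclusion after your first computation fails.

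The error in your first attempt is a misreading of the paper's (nonstandard) convention for ascents. The definition says ``$\pi_i$ is an ascent if $\pi_i<\pi_{i+1}$'': the \emph{value} $\pi_i$ is declared an ascent, not the position $i$. Thus ``$n$ is an ascent in $\pi$'' means that the value $n$ sits at some position $j$ with $\pi_j=n<\pi_{j+1}$. Since $n$ is the maximum of $[n]$, this forces $\pi_{j+1}=n+1$, i.e.\ $j=n$ by the convention $\pi_{n+1}=n+1$; equivalently, $\pi$ fixes $n$. Your argument ``$\pi_n<\pi_{n+1}=n+1$ is always true, hence $n$ is always an ascent'' confuses the value $n$ with the position $n$ and leads to the wrong $|X_n|=n!$. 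With the correct reading, $X_n$ corresponds under $\varphi$ to $\{\pi\in\mathfrak{S}_n:\pi_n=n\}$, whence $|X_n|=(n-1)!$ immediately, and your closing computation
\[
(n-1)!\cdot\frac{n^2+4n-6}{6}+(n-1)!=(n-1)!\cdot\frac{n^2+4n}{6}=n!\cdot\frac{n+4}{6}
\]
together with the direct check at $n=1$ finishes the proof, matching the paper verbatim.
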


\begin{proof}
For $n=1$, it is easy to verify that the result is right.
Now we consider the case $n\geq 2$.
Recall that $X_n$ is the  subset of $\mathcal{PT}_n$ such that for any permutation tableau $PT$ in $X_n$, the border edge labeled with $n$ of the underlying diagram of $PT$ is a south step.
By Lemma \ref{lem:bij-PT-Per}, we know that for a given permutation tableau $PT$,  the border edge labeled with $n$ is a south step in the underlying diagram of $PT$ if and only if $n$ is an ascent in $\varphi(PT)$, which means that $\varphi(PT)$ fixes $n$. Evidently, there are $(n-1)!$ such permutations in $\mathfrak{S}_n$, which implies that
$|X_n|=(n-1)!.$

Hence the number of corners in $\mathcal{T}_n$ is
\begin{align*}
c(\mathcal{T}_n)&=c(\mathcal{PT}_n)+|X_n|\\
&=(n-1)!\times\frac{n^2+4n-6}{6}+(n-1)!\\
&=(n-1)!\times\frac{n^2+4n}{6}\\
&=n!\times \frac{n+4}{6}.
\end{align*}
\end{proof}

As L.-Z. \cite{LZ} gave the number of occupied corners, see Proposition \ref{prop-Z}, we can give an enumerative result for \emph{non-occupied corners} in $\mathcal{T}_n$.
\begin{cor}\label{cor:noc}The number of non-occupied corners in $\mathcal{T}_n$ is $n!\times\frac{n-2}{6} $ for $n\geq 3$ and zero for $n=1,2$.
\end{cor}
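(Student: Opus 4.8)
The plan is to derive Corollary~\ref{cor:noc} directly by subtracting the count of occupied corners from the total count of corners. Theorem~\ref{corner-number-TT} tells us that $c(\mathcal{T}_n) = n!\times\frac{n+4}{6}$ for $n\geq 2$, and Proposition~\ref{prop-Z} tells us that the number of occupied corners in $\mathcal{T}_n$ is $n!$. Since every corner of a tree-like tableau is either occupied (filled with a point) or non-occupied, by definition these two classes partition the set of all corners. Hence, writing $\mathrm{noc}(\mathcal{T}_n)$ for the number of non-occupied corners,
\[
\mathrm{noc}(\mathcal{T}_n) = c(\mathcal{T}_n) - n! = n!\times\frac{n+4}{6} - n! = n!\times\frac{n+4-6}{6} = n!\times\frac{n-2}{6}.
\]

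First I would state explicitly that occupied and non-occupied corners form a partition of the corners, so that $c(\mathcal{T}_n)$ equals the sum of the two quantities; this is immediate from the definitions given in the introduction. Then I would substitute the two known formulas and simplify, which is the one-line computation above. This gives the formula $n!\times\frac{n-2}{6}$ for all $n\geq 2$, and in particular it yields $0$ when $n=2$, consistent with the claim. For $n=1$, the unique tree-like tableau of size $1$ is a single cell containing the root point; it has one corner, which is occupied, so there are no non-occupied corners, matching the statement. Thus the formula $n!\times\frac{n-2}{6}$ is valid for $n\geq 3$, and the exceptional small cases $n=1,2$ both give $0$ and can be checked by hand (indeed the formula already gives $0$ at $n=2$).

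There is essentially no obstacle here: the corollary is a routine arithmetic consequence of results already in hand. The only point requiring any care is the bookkeeping of small cases — one should note that the general formula $n!\times\frac{n-2}{6}$ happens to produce $0$ at $n=2$ as well, so the only genuinely separate case is $n=1$, where Theorem~\ref{corner-number-TT} gives $c(\mathcal{T}_1)=1$ and Proposition~\ref{prop-Z} gives $1$ occupied corner, leaving $0$ non-occupied corners. I would present this as a short two-sentence proof: invoke the partition of corners into occupied and non-occupied, subtract, simplify, and remark on the $n=1$ verification.
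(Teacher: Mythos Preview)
Your proof is correct and is exactly the intended one: the paper states this corollary without proof immediately after Theorem~\ref{corner-number-TT}, since it follows at once by subtracting the $n!$ occupied corners of Proposition~\ref{prop-Z} from the total $c(\mathcal{T}_n)=n!\cdot\frac{n+4}{6}$.
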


\section{Further Results about Corners}\label{sec:further_results}

In this section, we give an alternative proof of the enumeration of corners,
by constructing a bijection between corners in tree-like tableaux
and ascending runs of size 1 in permutations.
This answers to a question raised in \cite{GGSY2}.
Then, we present a conjecture about a possible refinement
of the enumeration of corners, by taking in account
the two statistics over tree-like tableaux, $top$ and $left$ defined in \cite{ABN}.
They count the number of non-root points in the first row
and in the first column respectively.
We end this section by explaining the consequences of this conjecture
on the PASEP in the case
$q=1$ and $\gamma=\delta=0$.

\subsection{Bijection between Corners and Ascending Runs of Size 1}\label{subsec-3.1}

An \emph{ascending run} of length $r$ of a permutation $\s=\s_1\cdots\s_n$,
is a sequence $(\s_m,\ldots,\s_{m+r-1})$ such that
$$\s_{m-1}>\s_m<\s_{m+1}<\cdots<\s_{m+r-2}<\s_{m+r-1}>\s_{m+r},$$
with the convention that $\s_{0}=n+1$ and $\s_{n+1}=0$.
In particular, $\s$ has a run of size 1 means that there exists $i\in[n]$
such that $\s_{i-1}>\s_{i}>\s_{i+1}$.

In order to build the bijection, we need a preliminary result about
\emph{non-ambiguous trees} \cite{ABB}, which are shorted for $nats$(i.e., a $nat$ represents a non-ambiguous tree). They correspond to
rectangular shaped tree-like tableaux.
The \emph{height} (resp. \emph{width}) of a non-ambiguous tree
is its number of row (resp. column) minus 1.
We have the following result about these objects (it is a reformulation of
\cite[Proposition~1.16]{ABDOHLZ}).
\begin{prop}\label{prop:nat}
Non-ambiguous trees of height $h$ and width $w$ are in bijection
with permutations $\sigma$ of $\{\b{1},\b{2},\ldots,\b{w},
\r{0},\r{1},\ldots,\r{h}\}$,
finishing by a pointed element and such that
if two consecutive elements $\s_i$ and $\s_{i+1}$ are both pointed or not pointed,
then $\s_i<\s_{i+1}$.
\end{prop}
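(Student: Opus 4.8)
I want to prove Proposition~\ref{prop:nat}, which identifies non-ambiguous trees (rectangular tree-like tableaux) of height $h$ and width $w$ with certain "doubly-ascending" permutations of the $w+h+2$ symbols $\b{1},\ldots,\b{w},\r{0},\ldots,\r{h}$. Since the statement is advertised as a reformulation of \cite[Proposition~1.16]{ABDOHLZ}, the real work is to set up a clean dictionary and check it matches; I would not expect to need any genuinely new idea. The strategy is to read off the permutation directly from the combinatorial structure of the $nat$ using its recursive (root-splitting) description, and to check that the two defining conditions — "finishes with a pointed element" and "no strict descent between two consecutive symbols of the same color" — characterize exactly the images.

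First I would fix conventions: a $nat$ of height $h$ and width $w$ is a tree-like tableau whose underlying diagram is the full $(h+1)\times(w+1)$ rectangle. Label the $w$ non-root columns with $\b{1},\ldots,\b{w}$ and the $h$ non-root rows with $\r{1},\ldots,\r{h}$ according to the border-edge labelling from Section~1, and introduce an extra pointed symbol $\r 0$ for the root row (equivalently the root point). Using rule~(2) in the definition of tree-like tableaux, each non-root point has a unique "parent" (the point directly above it or directly to its left), so the points form a tree rooted at the root point; the labelled rows and columns are its internal structure. The key observation, which I would isolate as a lemma, is that reading the points of the $nat$ in the order induced by a suitable traversal yields a sequence of the $w+h+2$ symbols, where a point gets its column-label if it is the topmost point of its column and its row-label otherwise (mirroring the filling rule in the description of $\phi$). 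I would then define $\sigma$ to be this sequence and verify the two conditions hold.

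The heart of the argument is the equivalence of conditions. For "finishes with a pointed element": the last symbol in the traversal corresponds to the point that is extremal in the bottom-right, and one checks it must be a row symbol (pointed, in the $\r{}$ alphabet) exactly because the rectangle has no empty last column forcing a west step at the end — this is the same phenomenon as the border edge labelled $n+1$ always being a west step in the Corollary after Lemma~\ref{prop}. For the monotonicity condition, I would argue that two consecutive same-color symbols $\s_i<\s_{i+1}$ correspond to a nesting/ordering constraint among columns (resp. rows) forced by rule~(2): if $\s_i$ and $\s_{i+1}$ are both column symbols, then their topmost points lie in a configuration that, by the no-both-parents rule, forces the smaller-indexed column to appear first. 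Conversely, given any permutation of the symbols meeting the two conditions, I would reconstruct the point set column-by-column and row-by-row and check rules~(1)–(3) are satisfied, giving a well-defined inverse map. Injectivity and surjectivity then follow because the traversal order is deterministic in both directions.

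**Main obstacle.** The delicate point is pinning down the exact traversal order so that the monotonicity condition comes out as stated (consecutive same-color symbols are \emph{increasing}, with no constraint across colors) rather than some twisted variant; getting the color-interleaving right, and handling the boundary symbols $\r 0$ (the root) and the final pointed element, is where sign/convention errors would creep in. I would mitigate this by first checking the bijection by hand on the small cases $h=w=1$ and $h=1,w=2$ against the count (the number of $nats$ of given height and width has a known product/binomial-type formula), and only then writing the general inductive verification. Once the dictionary is fixed, both directions are routine bookkeeping, so I would keep the exposition brief and lean on \cite{ABDOHLZ} for the parts that are a literal restatement.
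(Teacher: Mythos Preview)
Your plan diverges from the paper in a way that makes the task much harder than it needs to be. The paper's proof is literally a reformulation: it \emph{quotes} the existing bijection from \cite[Proposition~1.16]{ABDOHLZ} between non-ambiguous trees of height $h$ and width $w$ and pairs $(u,v)$ of $2$-colored words (blue letters on $[w]$, red on $[h]$, each letter used once, same-color blocks decreasing, $u$ ending red and $v$ ending blue), and then applies a one-line transformation: replace each blue $i$ by $\b{w-i+1}$, each red $i$ by $\r{h-i+1}$, and set $\sigma=v'\,\r{0}\,u'$. The reversal turns ``decreasing in each block'' into ``increasing in each block'', and the concatenation with the separator $\r{0}$ manufactures a single permutation ending in a pointed letter (since $u$ ended red). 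That is the entire argument.

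By contrast, you propose to rebuild the bijection from scratch via a tree traversal of the $nat$. This is not wrong in spirit---the pair $(u,v)$ in \cite{ABDOHLZ} does ultimately come from reading the two subtrees below the root---but your plan has a genuine gap: you never say \emph{which} traversal. ``A suitable traversal'' is exactly the content of the proposition, and you yourself flag this as the main obstacle without resolving it. Your heuristic for why the last symbol is pointed (``no empty last column forcing a west step'') and your explanation of the monotonicity condition via rule~(2) are both vague enough that they could equally well justify the wrong convention; the small-case check you propose would only detect, not supply, the missing specification. If you want to follow the paper's route, the fix is simply to invoke \cite[Proposition~1.16]{ABDOHLZ} and perform the relabeling above. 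If you insist on a direct construction, you must actually name the traversal (e.g., a depth-first reading with a fixed rule for ordering the children at each node) and then verify the two conditions---which amounts to reproving the cited result rather than reformulating it.
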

\begin{proof}
The initial result is that, non-ambiguous trees of height $h$ and width $w$
are in bijection with pairs $(u,v)$ of 2-colored words,
with blue letters on $[w]$ and red letters on $[h]$,
where each letter appear exactly once (in $u$ or in $v$),
letters in blocks of the same colors are decreasing,
$u$ (resp. $v$) ends by a red (resp. blue) letter.

In order to obtain the Proposition~\ref{prop:nat},
we turn pairs $(u,v)$ into the desired permutations $\sigma$.
Let us consider a non-ambiguous tree $nat$ and its corresponding pair
$(u,v)$. We start by constructing a pair $(u',v')$ by replacing
the blue (resp. red) letters $i$ of $u$ and $v$
by the uncolored (resp. uncolored pointed) letters $w-i+1$ (resp. $h-i+1$).
The permutation $\sigma$ corresponding to $nat$ is $v'\r{0}u'$.
\end{proof}

The bijection between corners and ascending runs of size 1
is decomposed into two steps: Lemma~\ref{lemme:coupe} and
Lemma~\ref{lemme:triplet_to_permutation}.

\begin{lem}\label{lemme:coupe}
For $n\geq1$, there is a bijection between corners in $\T_n$ and triplets $(T_l,T_r,nat)$ such that
\begin{itemize}
\item $T_l$ is a tree-like tableau of size $n_l$,
\item $T_r$ is a tree-like tableau of size $n_r$,
\item $n_l+n_r+1=n$,
\item $nat$ is a non-ambiguous tree of height $left(T_r)+1$ and width $top(T_l)+1$.
\end{itemize}
\end{lem}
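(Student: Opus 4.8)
The plan is to decompose a corner in a tree-like tableau by cutting the tableau at that corner into a "left part" and a "right part," then recording the local structure around the corner as a non-ambiguous tree. First I would fix $T\in\T_n$ together with a distinguished corner cell $c$, sitting at the intersection of some row $i$ and some column $j$; the bottom edge of $c$ is a south step labeled $i$ and the right edge is a west step labeled $j$ on the southeast border. The idea is that everything "weakly above and to the left" of $c$ along the border carves out a smaller tableau, and the rectangular region hanging off of $c$ captures the interaction term. Concretely, I would cut the Young diagram along the border path near $c$: the cells strictly above row $i$ and in columns to the left of (and including) the columns meeting those rows form the shape of $T_l$, the cells strictly below/right form $T_r$, and the rectangle whose rows are those passing through column $j$ and whose columns are those passing through row $i$ — the cells that "see" the corner $c$ — carry a filling that, after the cut, must satisfy precisely the non-ambiguous tree axioms.

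Next I would make the three pieces precise as tree-like tableaux. After removing the first column at various stages one has to be careful, but the rows above $c$ together with their points form a tree-like tableau $T_l$ once we restrict; its size $n_l$ is the number of points retained, and by construction the number of non-root points in its first row is exactly what governs the width of the attached $nat$, i.e. $\mathrm{top}(T_l)+1$ columns can dock onto it. Symmetrically $T_r$ is the tree-like tableau formed by the rows at or below row $i$ (reindexed), with $\mathrm{left}(T_r)+1$ controlling the available rows, hence the height of the $nat$. The point count bookkeeping should give $n_l+n_r+1=n$, the "$+1$" accounting for the single point that, in the reconstruction, will become the root-type point anchoring the rectangular $nat$ piece at the corner. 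The content of the rectangular region between them — which is forced to be rectangular because $c$ is a corner, so its bottom-right cell has both edges on the border — is a filling satisfying rules (1)–(3) of tree-like tableaux on a rectangle, i.e. exactly a non-ambiguous tree of the stated dimensions.

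Then I would construct the inverse map: given $(T_l,T_r,nat)$ with the compatibility $\mathrm{height}(nat)=\mathrm{left}(T_r)+1$ and $\mathrm{width}(nat)=\mathrm{top}(T_l)+1$, glue $nat$ below the first row of $T_l$ and to the right of the first column of $T_r$, aligning the extra row/column of $nat$ with the first row of $T_l$ and first column of $T_r$ so that the docking points coincide; verify that the resulting filling satisfies the tree-like tableau rules (the "exactly one of above/left" condition is where the non-ambiguity of the $nat$ and the top/left counts interact), that its size is $n$, and that the newly created cell at the join is a corner. Checking that these two maps are mutually inverse is then a matter of tracking the border path and the points through both directions.

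The main obstacle I expect is getting the interface conditions exactly right: making sure the "extra" row and column of the non-ambiguous tree (the ones indexed by $\r 0$ and, in the blue letters, the full width) match up correctly with the first row of $T_l$ and the first column of $T_r$, and that the point which becomes the root of the $nat$ is consistently either inherited from $T$ or created fresh, so that the size identity $n_l+n_r+1=n$ holds with no off-by-one error. In particular one must confirm that after the cut the piece $T_l$ genuinely has no empty rows or columns — rows or columns that lose all their points to the rectangular region must be precisely those absorbed into the $nat$, which is why the dimensions are $\mathrm{top}(T_l)+1$ and $\mathrm{left}(T_r)+1$ rather than $\mathrm{top}(T_l)$ and $\mathrm{left}(T_r)$ — and symmetrically for $T_r$. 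Proposition~\ref{prop:nat} is not strictly needed for this lemma itself but signals that the $nat$ piece will later be re-encoded as a permutation in Lemma~\ref{lemme:triplet_to_permutation}, so I would phrase the bijection here purely in terms of the triplet and defer the permutation encoding.
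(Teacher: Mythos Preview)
Your high-level strategy is right and matches the paper: cut $T$ at the corner $c$ to isolate a rectangular piece that becomes the non-ambiguous tree, with the remaining two pieces yielding $T_l$ and $T_r$. But the geometry you describe is muddled, and you are missing the one technical idea that makes the construction work.

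In the paper, cutting along the horizontal line through the bottom edge of $c$ and the vertical line through its right edge partitions $T$ into three \emph{disjoint} pieces: the rectangle $M$ (from the root down to $c$), the bottom piece $L$ below that horizontal line, and the right piece $R$ to the right of that vertical line. Your description instead takes $T_l$ to be ``cells strictly above row $i$'' and $T_r$ to be ``cells strictly below/right,'' which overlaps with your rectangular region and is not a partition; with that choice, $\mathrm{top}(T_l)$ would be governed by the original first row of $T$, not by the nat, so the dimension identity $\mathrm{width}(nat)=\mathrm{top}(T_l)+1$ could not hold.

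The step you are missing is this: $T_l$ and $T_r$ are \emph{not} obtained by restriction. Rather, one \emph{adds} to $L$ a brand new first row (of length one less than the number of columns of $M$), placing a dot in each cell that sits over a non-empty column of $M$; symmetrically one adds to $R$ a new first column recording the non-empty rows of $M$. This single move simultaneously (i) resolves the obstacle you correctly flag, since any column of $L$ lacking a point now gets one in the new first row, so $T_l$ is a genuine tree-like tableau; and (ii) forces $\mathrm{top}(T_l)+1$ and $\mathrm{left}(T_r)+1$ to equal the number of non-empty columns and rows of $M$, i.e., the width and height of $nat$ after empty lines are stripped from $M$. Your proposed fix (``rows or columns that lose all their points \ldots\ are absorbed into the $nat$'') does not achieve this and leaves the size identity $n_l+n_r+1=n$ unaccounted for. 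Once you replace ``restrict'' by ``add a dotted first row/column encoding the non-empty lines of $M$,'' the inverse is exactly what you sketch, and the bookkeeping falls out.
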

\begin{proof}
We use two times the idea exposed in Lemma 4 of \cite{ABB}.
Let $T$ be in $\T_n$ and $c$ one of its corners. We start by cutting $T$
along the lines corresponding to the bottom and the right edges of $c$,
as shown in Figure~\ref{fig:coupe_a}. We denote by $L$ the bottom part,
$M$ the middle part and $R$ the right part. In order to obtain
$T_l$ we add to $L$ a first row whose length is equal to the number of columns of $M$
minus 1. There is exactly one way to add dots in this first row
for $T_l$ to be a tree-like tableau: we put them inside the cells
corresponding to non empty columns in $M$. In a similar way, we obtain $T_r$ from $R$
by adding a dotted first column. $T_l$ and $T_r$ are two tree-like tableaux,
and the sum of their length is equal to the length of $T$, hence $n_l+n_r+1=n$.
Finally, removing the empty rows and columns of $M$ we obtain $nat$.
This procedure is illustrated in Figure~\ref{fig:coupe_b}.
It should be clear that the construction can be reversed and that
$T_l$, $T_r$ and $nat$ verifies the desired conditions.
\begin{figure}[h!]
    \begin{center}
        \captionsetup[subfigure]{width=150pt}
        \subfloat[The cutting of $T$ defined by the corner $c$.]{\label{fig:coupe_a}\includegraphics[scale=.5]{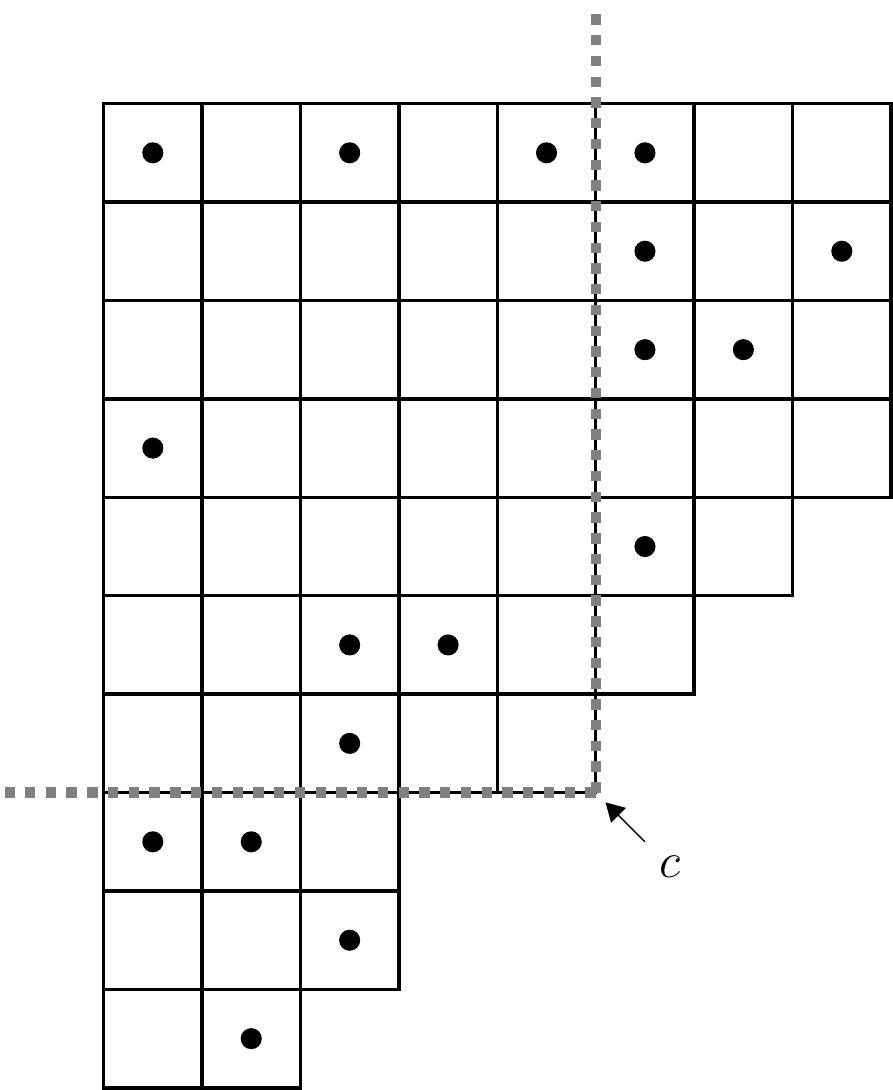}}
        \hfil\hfil\hfil\hfil
        \subfloat[The three parts obtained from $T$. (To obtain $nat$, the gray cells should be removed.)]{\label{fig:coupe_b}\includegraphics[scale=.5]{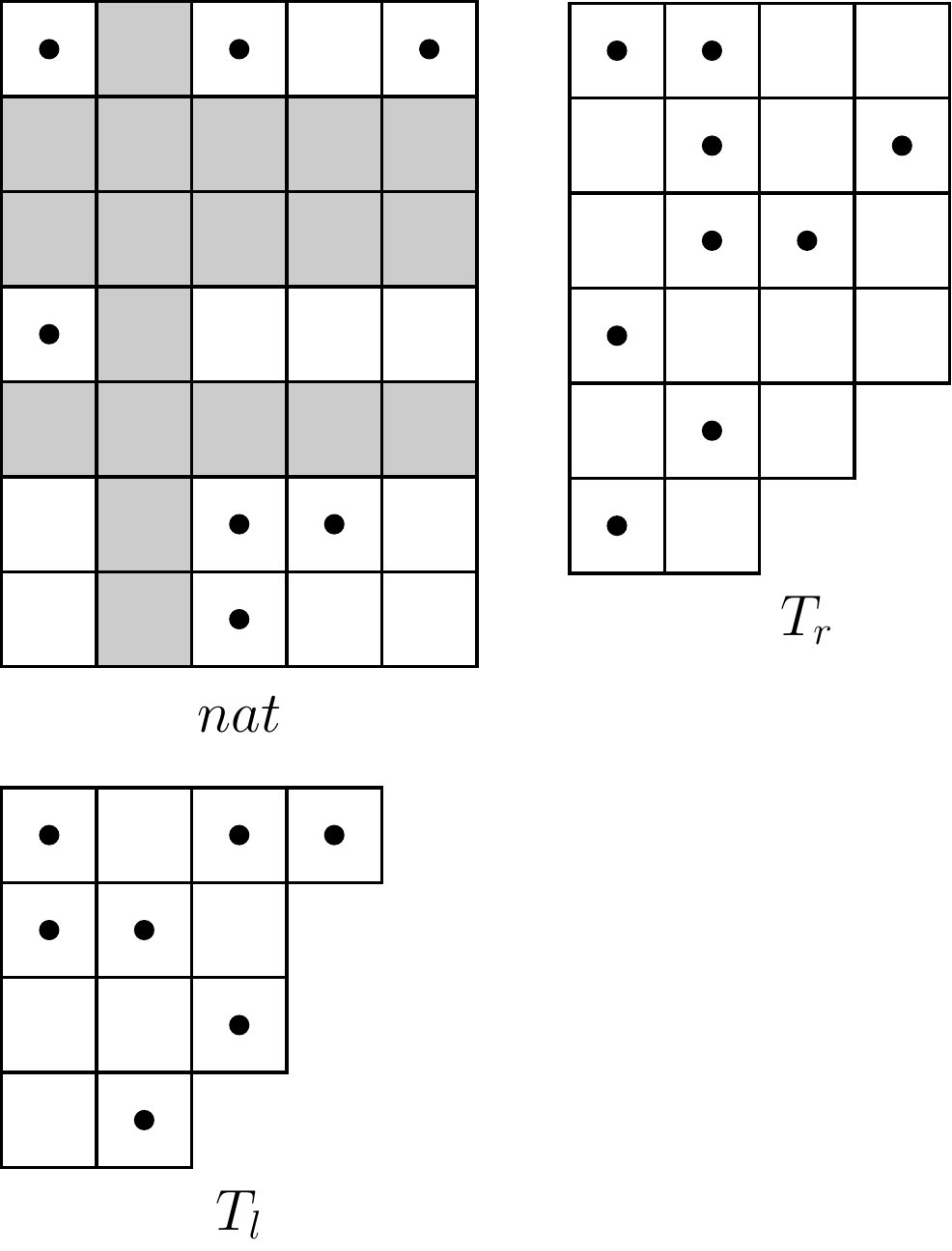}}

        \caption{An example of the bijection of Lemma~\ref{lemme:coupe}.}
        \label{fig:coupe}
    \end{center}
\end{figure}
\end{proof}

\begin{lem}\label{lemme:triplet_to_permutation}
The triplets $(T_l,T_r,nat)$ satisfying all the conditions in Lemma \ref{lemme:coupe} are in bijection with runs of size 1 in permutations of size $n$.
\end{lem}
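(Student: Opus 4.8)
The plan is to assemble the bijection of Lemma~\ref{lemme:triplet_to_permutation} by concatenating three words built from the three pieces of the triplet, with the middle element of the run of size~1 serving as the ``seam'' between the left and right parts. Recall from Lemma~\ref{lem:bij-PT-Per} (applied through the bijections $\phi$ and $\varphi$ of the previous section) that a tree-like tableau of size $m$ is encoded by a permutation of $\mathfrak{S}_m$ in which the labels of rows become ascents and the labels of west steps become descents; in particular the two statistics $top$ and $left$ on a tree-like tableau translate into readable features of its associated permutation (roughly, $top(T_l)$ controls how many letters sit to the left of the maximal opening block, and $left(T_r)$ controls the corresponding quantity at the start of $T_r$'s permutation). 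First I would fix a corner in $\T_n$, pass through Lemma~\ref{lemme:coupe} to get $(T_l,T_r,nat)$, and then record $T_l$ by a word $w_l$ on one alphabet, $T_r$ by a word $w_r$ on a disjoint alphabet, and $nat$ by the permutation $\sigma$ furnished by Proposition~\ref{prop:nat} on the alphabet $\{\b1,\ldots,\b{w},\r0,\ldots,\r h\}$ with $w=top(T_l)+1$ and $h=left(T_r)+1$.

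Next I would splice: relabel everything onto $[n]$ so that the letters coming from $T_l$ and from the blue part of $\sigma$ occupy the bottom $n_l+\dots$ of the range, the letters from $T_r$ and the red part of $\sigma$ occupy the top, and a single distinguished value $v^\star$ (the image of $\r0$) is placed at the junction. The point of Proposition~\ref{prop:nat}'s normalization — $\sigma$ ends in a pointed letter, and monochromatic consecutive pairs are increasing — is exactly what is needed so that, after insertion, $v^\star$ becomes a run of size~1: the blue letters feeding in from the $T_l$ side arrive in decreasing order up to $v^\star$, giving $\sigma_{i-1}>v^\star$, and the red letters leaving toward the $T_r$ side start below $v^\star$, giving $v^\star>\sigma_{i+1}$, so $\sigma_{i-1}>v^\star>\sigma_{i+1}$. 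Conversely, given a permutation $\rho$ of size $n$ with a run of size~1 at position $i$, I would locate $i$, split $\rho$ at that point, and read off: everything that forms the maximal ascending stretch descending into $\rho_i$ and the maximal ascending stretch leaving $\rho_i$ determines the $nat$-part via Proposition~\ref{prop:nat}, while the two residual words on the remaining letters — after order-preserving relabelling to $[n_l]$ and $[n_r]$ — are the permutations coding $T_l$ and $T_r$. One checks $n_l+n_r+1=n$ and that the heights/widths match $left(T_r)+1$ and $top(T_l)+1$, which are precisely the conditions in Lemma~\ref{lemme:coupe}.

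The main obstacle I anticipate is bookkeeping the interface correctly: making the ``which letters are blue versus come from $T_l$, which are red versus come from $T_r$'' decomposition genuinely invertible. Concretely, from a permutation with a run of size~1 one must be able to tell, among the letters immediately preceding $\rho_i$ in the ascending run, which belong to the non-ambiguous tree and which belong to $T_l$'s permutation; the resolution is that the $nat$-letters are exactly those lying in the ``window'' of values between suitable thresholds determined by $\rho_i$ and by where the ascending/descending excursions around position $i$ terminate, and Proposition~\ref{prop:nat} guarantees there is a unique legal way to interleave them. I would make this precise by describing the forward map as: (i) form $w_l$ and $w_r$; (ii) shift $w_r$ and the red block of $\sigma$ up by $n_l+1$; (iii) interleave $w_l$ with the blue block and $w_r$ with the red block according to relative order, leaving $v^\star=n_l+1$ fixed in the middle; (iv) output the concatenation (left word)$\,v^\star\,$(right word). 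Verifying that steps (i)--(iv) land on a permutation whose unique structural feature at the seam is a run of size~1, and that every such permutation arises exactly once, is the crux; once the interface convention is pinned down, both directions are routine. Finally, chaining this lemma with Lemma~\ref{lemme:coupe} yields a bijection between corners in $\T_n$ and runs of size~1 in $\mathfrak{S}_n$, which (together with the known count of runs of size~1, or a short direct count) re-proves Theorem~\ref{corner-number-TT}.
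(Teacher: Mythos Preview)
Your proposal has a genuine gap at exactly the point you flag as ``the main obstacle'': the interface bookkeeping is not merely delicate, it does not work as described. Two concrete problems.

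First, the forward map is underspecified. You encode $T_l$ and $T_r$ via Corteel--Nadeau as words $w_l,w_r$ of lengths $n_l,n_r$, and $nat$ as a word $\sigma$ on $w+h+1$ letters with $w=top(T_l)+1$, $h=left(T_r)+1$. Step~(iii) says to ``interleave $w_l$ with the blue block and $w_r$ with the red block according to relative order,'' but $w_l$ has $n_l$ letters while the blue block of $\sigma$ has only $w$ letters; these are different sizes and there is no canonical interleaving. The output template ``(left word)\,$v^\star$\,(right word)'' then places $v^\star=n_l+1$ at a fixed position, which forces the values smaller than $v^\star$ to lie entirely to its left and the larger ones entirely to its right---but that is far too restrictive to hit every permutation with a run of size~1 (consider $4\,2\,6\,11\,9\,12\,\underline{8}\,3\,7\,1\,5\,10$, where small and large values alternate on both sides of the run).

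Second, your proposed inverse---reading the $nat$ off the ``maximal ascending stretch descending into $\rho_i$'' and the one leaving it---is local, whereas the information carried by $nat$ is global. In the example just given, the $nat$ must record the entire pattern of alternation between blocks of values below~$8$ and blocks of values above~$8$ across the whole permutation, not just the two letters adjacent to~$8$.

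The paper's proof supplies the missing structural idea: it does \emph{not} use Corteel--Nadeau for $T_l$ and $T_r$. Instead it uses the bijection (via unrestricted rows, right-to-left minima, and the Foata--Sch\"utzenberger transformation fondamentale) sending a tree-like tableau with $k$ points in its first row/column to a permutation with exactly $k$ cycles. Writing each cycle with its maximum first gives a canonical decomposition of $l\sigma$ into $w$ blocks $L_1,\ldots,L_w$ and of $r\sigma$ (shifted by $n_l+1$) into $h$ blocks $R_1,\ldots,R_h$; now the substitution $\dot 0\mapsto n_l+1$, $\dot i\mapsto L_i$, $i\mapsto R_i$ into the $nat$-word is well-defined and size-consistent. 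The inverse works because within each maximal run of values below (resp.\ above) $n_l+1$, the transformation fondamentale recovers the cycle boundaries from left-to-right maxima. This block-by-cycle mechanism is the crux you are missing.
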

\begin{proof}
The idea of the proof is the following: from a triplet $(T_l,T_r,nat)$ we construct
a permutation $\s$ of size $n$ which have a run of size 1 $(\s_k)$ such that $\s_k=n_l+1$ for some $k$.
$T_l$ gives the ordering of the values smaller than $\s_k$,
$T_r$ the ordering of the values bigger than $\s_k$,
and $nat$ tells us how we mix them and where we put $\s_k$.

Tree-like tableaux of size $n$ with $k$ points in the first column are in bijection with
permutations of size $n$ with exactly $k$ cycles.
Indeed, by \cite[Proposition~1.3]{ABN}, they
are in bijection with permutation tableaux of size $n$ with $k$
unrestricted rows. Moreover, by \cite{CN}[Theorem~1], they are in bijection with
permutations of size $n$ with $k$ right-to-left minimum.
Finally, the statistic of right-to-left minimum is equi-distributed
with the statistic left-to-right maximum, which is itself equi-distributed
with the statistic of the number of cycles as shown by the
``transformation fondamentale" of Foata-Sch\"utzenberger (\cite[Proposition~1.3.1]{S}
or \cite[Section~1.3]{FS}). Using an axial symmetry with respect to the main diagonal
of the underlying diagram of $T$, we deduce the same result for tree-like tableaux
of size $n$ with $k$ points in the first row.
We denote by $h$ and $w$ the height and the width of $nat$ respectively.
Let $l\s$ (resp. $r\s$) be the permutation associated to $T_l$ (resp. $T_r$)
by this bijection. We denote by $L_1,\cdots,L_w$ (resp. $R_1,\cdots,R_h$)
the disjoint cycles of $l\s$ (resp. $r\s$), such that if $i<j$, then
the maximum of $L_i$ (resp. $R_i$) is smaller than the maximum of $L_j$ (resp.$R_j$).
In addition, we shift the values of the $R_i$ by $n_l+1$.
We will write a cycle without parenthesis and with its biggest element
at the first position.
For example, suppose $l\s=(6)(7523)(9184)$ and $r\s=(423)(5)(716)(98)$, then
$L_1=6$, $L_2=7\text{ }5\text{ }2\text{ }3$, $L_3=9\text{ }1\text{ }8\text{ }4$,
$R_1=14\text{ }12\text{ }13$, $R_2=15$, $R_3=17\text{ }11\text{ }16$ and $R_4=19\text{ }18$.
Let $m$ be the word corresponding to $nat$.
If $\r{0}$ is not the last letter of $m$ and if the letter after $\r{0}$ is not pointed,
we can uniquely represent $m$ as $$m=u\r{0}\b{a_1}\r{b_1}\b{a_2}\r{b_2}\cdots\b{a_p}\r{b_p},$$
where $u$ can be empty,
and the words $\r{b_i}$ (resp. $\b{a_i}$) consist of a non empty increasing sequence
of pointed (resp. non pointed) letter.
In this case, we replace $m$ by swaping
subwords $\b{a_i}$ and $\r{b_i}$ for all $i$ ($1\leq i\leq p$), i.e.,
$$m^*=u\r{0}\r{b_1}\b{a_1}\r{b_2}\b{a_2}\cdots\r{b_p}\b{a_p}.$$
Obviously, this operation is bijective.

We finish the general construction by substituting
$n_l+1$ for $\r{0}$, $L_i$ for $\r{i}$ and $R_i$ for $\b{i}$.
For example, if $$m=\b{2}\b{3}\r{2}\r{3}\b{1}\b{4}\r{0}\r{1}$$ then we obtain the run
of size one $$15\text{ }17\text{ }11\text{ }16\text{ }7\text{ }5\text{ }2\text{ }3\text{ }9\text{ }1\text{ }8\text{ }4\text{ }14\text{ }12\text{ }13\text{ }19\text{ }18\text{ }\underline{10}\text{ }6.$$
Another example, if $$m=\r{1}\b{4}\r{0}\b{12}\r{2}\b{3}\r{3}$$ then
$$m^*=\r{1}\b{4}\r{0}\r{2}\b{12}\r{3}\b{3}$$ and thus we get
$$6\text{ }19\text{ }18\text{ }\underline{10}\text{ }7\text{ }5\text{ }2\text{ }3\text{ }14\text{ }12\text{ }13\text{ }15\text{ }9\text{ }1\text{ }8\text{ }4\text{ }17\text{ }11\text{ }16.$$
In order to reverse the construction from a run of size one $(\s_k)$,
we use the ``transformation fondamentale" in each maximal sequence of integers
smaller (resp. larger) than $\s_k$. This way, we are able to identify
the $L_i$ (resp. $R_i$), so that we can recover $l\s$, $r\s$ and $nat$.
For example, if we consider the run
$$\text{4 2 6 11 9 12 \underline{8} 3 7 1 5 10},$$
we obtain
$$
\underbrace{4\text{ }2}_{L_\r{2}}
\underbrace{6}_{L_\r{3}}
\underbrace{11\text{ }9}_{R_\b{2}}
\underbrace{12}_{R_\b{3}}
\underbrace{8}_{\ _\r{0}}
\underbrace{3}_{L_\r{1}}
\underbrace{7\text{ }1\text{ }5}_{L_\r{4}}
\underbrace{10}_{R_\b{1}},$$
hence
$$
l\s = (3)(4\text{ }2)(6)(7\text{ }1\text{ }5)
\text{, }
r\s = (2)(3\text{ }1)(4)
\text{, }
m^*=\r{2}\r{3}\b{2}\b{3}\r{0}\r{1}\r{4}\b{1},
m = \r{2}\r{3}\b{2}\b{3}\r{0}\b{1}\r{1}\r{4}
.$$
\end{proof}
As a consequence of Lemma~\ref{lemme:coupe} and Lemma~\ref{lemme:triplet_to_permutation},
we have the following theorem.
\begin{thm}\label{thm:corners_runs}
For $n\geq1$, corners in $\T_n$ are in bijection with runs of size 1 in $\mathfrak{S}_n$.
\end{thm}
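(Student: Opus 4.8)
The plan is to obtain Theorem~\ref{thm:corners_runs} simply by composing the two bijections already established in Lemma~\ref{lemme:coupe} and Lemma~\ref{lemme:triplet_to_permutation}. Concretely, Lemma~\ref{lemme:coupe} sets up a bijection between corners in $\T_n$ and triplets $(T_l,T_r,nat)$ with $T_l$ a tree-like tableau of size $n_l$, $T_r$ a tree-like tableau of size $n_r$, $n_l+n_r+1=n$, and $nat$ a non-ambiguous tree of height $left(T_r)+1$ and width $top(T_l)+1$; Lemma~\ref{lemme:triplet_to_permutation} then sends the set of all such triplets bijectively to the set of runs of size $1$ in permutations of $\mathfrak{S}_n$. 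Since the target of the first bijection is exactly the source of the second, their composite is a bijection from corners in $\T_n$ to runs of size $1$ in $\mathfrak{S}_n$, which is precisely the claim.

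First I would state that the composition $\Psi := \Psi_2\circ\Psi_1$, where $\Psi_1$ is the map of Lemma~\ref{lemme:coupe} and $\Psi_2$ is the map of Lemma~\ref{lemme:triplet_to_permutation}, is well defined because the codomain of $\Psi_1$ (triplets satisfying the four bulleted conditions) coincides with the domain of $\Psi_2$. Then I would note that a composition of two bijections is a bijection, so $\Psi$ is a bijection between corners in $\T_n$ and runs of size $1$ in $\mathfrak{S}_n$. For $n=1$ the statement should be checked directly as a sanity case: the unique tree-like tableau of size $1$ has no corner, and the unique permutation in $\mathfrak{S}_1$ has no run of size $1$ under the stated conventions $\s_0=n+1$, $\s_{n+1}=0$ (since $1<2$ fails $\s_0>\s_1$ only if we read carefully — in fact $\s_0=2>\s_1=1>\s_2=0$, so one must verify the small cases honestly to be sure the count matches; a quick tabulation for $n=1,2,3$ confirms consistency with $c(\T_n)$).

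Since the heavy lifting is entirely contained in the two lemmas, there is essentially no obstacle left at this stage — the only thing to be careful about is making sure the two intermediate object-sets are literally the same set (same constraints on $n_l,n_r$, and on the height/width of $nat$ in terms of $left$ and $top$), so that no compatibility condition is silently dropped when composing. A secondary point worth a sentence is that the bijection is effective and reversible in both directions, as illustrated by the worked examples in the proof of Lemma~\ref{lemme:triplet_to_permutation}, so one could, if desired, track the corner's position through $\Psi$ and describe the image run explicitly; but for the theorem as stated, invoking the composition of Lemmas~\ref{lemme:coupe} and~\ref{lemme:triplet_to_permutation} suffices.
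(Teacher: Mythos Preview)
Your approach is exactly the paper's: the theorem is stated as an immediate consequence of composing the bijections of Lemma~\ref{lemme:coupe} and Lemma~\ref{lemme:triplet_to_permutation}, with no further argument needed. One factual slip in your aside: the unique tree-like tableau of size $1$ \emph{does} have a corner (its single cell has both its bottom and right edges on the border), which matches the single run of size $1$ in $\mathfrak{S}_1$ that you eventually compute; so the $n=1$ case checks out, but not for the reason you first wrote.
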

Even if we send corners to runs of size 1, the two statistics does not have the same distribution.
For example, the permutation 321 have 3 runs of size 1 while a tree-like tableau of size 3 cannot have 3 corners.

We deduce from Theorem~\ref{thm:corners_runs} the enumeration of corners,
by enumerating ascending runs of size 1.
\begin{cor}
The number of corners in $\T_n$ is $n!\cdot\frac{n+4}{6}$ for $n\geqslant2$ and 1 for $n=1$.
\end{cor}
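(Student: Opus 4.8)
The plan is to reduce the statement to a routine enumeration of ascending runs of size $1$ in $\mathfrak{S}_n$, using Theorem~\ref{thm:corners_runs}. Since corners in $\T_n$ are in bijection with runs of size $1$ in permutations of $\mathfrak{S}_n$, it suffices to count the pairs $(\s,i)$ with $\s\in\mathfrak{S}_n$ and $i\in[n]$ such that $(\s_i)$ is an ascending run of size $1$, i.e. $\s_{i-1}>\s_i>\s_{i+1}$ under the convention $\s_0=n+1$, $\s_{n+1}=0$. Equivalently, writing $R_n$ for this count, one has $c(\T_n)=R_n$, and I want to show $R_n=n!\cdot\frac{n+4}{6}$ for $n\geq2$ and $R_1=1$.

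First I would handle small cases by hand ($n=1$: the single permutation $1$ has the run $(1)$ since $2>1>0$, giving $R_1=1$; $n=2$: both permutations contribute, and $2!\cdot\frac{6}{6}=2$ checks out). Then for $n\geq2$ I would compute $R_n=\sum_{i=1}^{n}N_i$, where $N_i$ is the number of $\s\in\mathfrak{S}_n$ with $\s_{i-1}>\s_i>\s_{i+1}$, splitting into the boundary cases $i=1$ (condition $\s_1>\s_2$, with $\s_0=n+1$ automatically $>\s_1$), $i=n$ (condition $\s_{n-1}>\s_n$, with $\s_{n+1}=0$ automatically $<\s_n$), and the interior cases $2\leq i\leq n-1$ (condition $\s_{i-1}>\s_i>\s_{i+1}$ on three genuine entries). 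For the interior positions, the number of permutations for which three fixed consecutive positions are strictly decreasing is $n!/3!=n!/6$, so each interior position contributes $n!/6$; for $i=1$ and $i=n$ the relevant condition involves only two genuine entries, contributing $n!/2$ each. Summing gives $R_n=2\cdot\frac{n!}{2}+(n-2)\cdot\frac{n!}{6}=n!+\frac{(n-2)n!}{6}=n!\cdot\frac{6+n-2}{6}=n!\cdot\frac{n+4}{6}$, exactly as claimed.

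Alternatively — and this is probably the cleanest write-up — I would note that the number of ascending runs of size $1$ in a permutation $\s\in\mathfrak{S}_n$ equals the number of double descents in the word $(n+1)\,\s_1\cdots\s_n\,0$, i.e. the number of $i\in[n]$ with $\s_{i-1}>\s_i>\s_{i+1}$, and compute $R_n=\sum_i\Prob[\s_{i-1}>\s_i>\s_{i+1}]\cdot n!$ by linearity of expectation over a uniform random $\s$. The only subtlety, and the one point I would be careful about, is the treatment of the two boundary positions where one of the two inequalities is forced by the convention $\s_0=n+1$ or $\s_{n+1}=0$; there the event has probability $1/2$ rather than $1/6$. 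This is not a real obstacle, just a bookkeeping step, and once it is handled the arithmetic is immediate. No essentially new idea beyond Theorem~\ref{thm:corners_runs} is needed.

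\begin{proof}
By Theorem~\ref{thm:corners_runs}, $c(\T_n)$ equals the number of pairs $(\s,i)$ with $\s\in\mathfrak{S}_n$ and $i\in[n]$ such that $(\s_i)$ is an ascending run of size $1$ of $\s$, i.e. $\s_{i-1}>\s_i>\s_{i+1}$, with the conventions $\s_0=n+1$ and $\s_{n+1}=0$. Write this count as $R_n$. For $n=1$ the only permutation is $\s=1$, and $\s_0=2>1>0=\s_2$, so $R_1=1$. Now let $n\geq2$. For a fixed position $i$ with $2\leq i\leq n-1$ the condition $\s_{i-1}>\s_i>\s_{i+1}$ depends on three genuine entries of $\s$ in three fixed positions, so the number of $\s\in\mathfrak{S}_n$ satisfying it is $n!/3!=n!/6$. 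For $i=1$ the condition is $\s_1>\s_2$ (the inequality $\s_0>\s_1$ being automatic), satisfied by $n!/2$ permutations; likewise $i=n$ gives $n!/2$. Hence
\[
R_n=2\cdot\frac{n!}{2}+(n-2)\cdot\frac{n!}{6}=n!+\frac{(n-2)\,n!}{6}=n!\cdot\frac{n+4}{6}.
\]
Therefore $c(\T_n)=n!\cdot\frac{n+4}{6}$ for $n\geq2$ and $c(\T_1)=1$.
\end{proof}
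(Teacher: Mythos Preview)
Your proof is correct and follows essentially the same approach as the paper: both invoke Theorem~\ref{thm:corners_runs} to reduce to counting runs of size~$1$ in $\mathfrak{S}_n$, then split into the two boundary positions (contributing $n!/2$ each, equivalently $\binom{n}{2}(n-2)!$) and the $n-2$ interior positions (contributing $n!/6$ each, equivalently $\binom{n}{3}(n-2)!$ in total via the block argument), and sum. The only difference is cosmetic---you phrase the count per position as $n!/k!$ while the paper phrases it as $\binom{n}{k}$ choices of values times ways to complete the permutation---but the arithmetic and the case split are identical.
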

\begin{proof}
Theorem~\ref{thm:corners_runs} implies that enumerating
ascending runs of size 1 in permutations is sufficient.
For $n=1$, there is a unique ascending run which corresponds to the unique
permutation of size one. For $n\geqslant2$,
we start by considering the ascending runs of size 1 which are at the first position.
Such a run happens in a permutation $\s$, if $\s_1>\s_2$.
We have $\binom{n}{2}$ to choose these two integers, and then,
$(n-2)!$ ways to construct the permutation. We have the same result for ascending
runs of size 1 which are at the last position. The remaining runs happen if there exists
$2\leq i \leq n-1$ such that $\s_{i-1}>\s_i>\s_{i+1}$. We have $\binom{n}{3}$ ways
to choose these three integers, and then, $(n-2)!$ ways to construct the permutation.
In total, the number of runs of size 1 is
$$\left[2\cdot\binom{n}{2}+\binom{n}{3}\right]\cdot(n-2)!=n!\cdot\frac{n+4}{6}$$
\end{proof}

\subsection{($a$,$b$)-analogue of the Average Number of Non-occupied Corners}

As explained in \cite[Section 4]{LZ}, computing the average number of corners
gives us the average number of locations where a particle may jump to the left
or to the right in the PASEP model, in the case $\alpha=\beta=q=1$ and $\delta=\gamma=0$.
Computing the ($a$,$b$)-analogue of average number of corners
$$c_n(a,b):=\sum_{T\in\T_n}c(T) \cdot w(T),$$
where $w(T) = a^{top(T)}b^{left(T)},$
would extend the result to the case $q=1$ and $\delta=\gamma=0$,
if we replace $a$ by $\alpha^{-1}$ and $b$ by $\beta^{-1}$.

The ($a$,$b$)-analogue of the average number of tree-like tableaux,
was computed in \cite{ABN}, it is equal to
$$T_n(a,b):=\sum_{T\in\T_n} w(T) = (a+b)(a+b+1)\cdots(a+b+n-2).$$
It turns out that the ($a$,$b$)-analogue of the average number of occupied corners
is also $T_n(a,b)$. In order to prove this, we just need to redo the short proof
of \cite[Section~3.2]{LZ} keeping track of left and top points.
As a consequence of this result, computing the ($a$,$b$)-analogue
for corners or for non-occupied corners, is equivalent.
In this section, we focus on non-occupied corners,
because their study seems easier.
We denote by $noc_n(a,b)$ the ($a$,$b$)-analogue of the average number
of non-occupied corners, i.e.,
$$noc_n(a,b):=\sum_{T\in\T_n}noc(T) w(T),$$
where $noc(T)$ is the number of non-occupied corners of $T$.
In particular, Corollary~\ref{cor:noc} implies that $noc_n(1,1)=n!\cdot\frac{n-2}{6}$.
Using an implementation of tree-like tableaux in Sage \cite{sage},
the following conjecture has been experimentally confirmed until $n=10$.

\begin{conj}
For $n\geq 3$, the ($a$,$b$)-analogue of the enumeration of non-occupied corners is
$$
noc_n(a,b)
=
\left(
    (n-2)ab+\binom{n-2}{2}(a+b)+\binom{n-2}{3}
\right)
\cdot
T_{n-2}(a,b)
$$
\end{conj}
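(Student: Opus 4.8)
The plan is to refine the bijective machinery of Section~\ref{subsec-3.1} so that it tracks the statistics $top$ and $left$, and then to extract the generating function identity. The starting point is Lemma~\ref{lemme:coupe}: a corner of $T\in\T_n$ corresponds to a triplet $(T_l,T_r,nat)$ with $n_l+n_r+1=n$ and $nat$ a non-ambiguous tree of height $left(T_r)+1$ and width $top(T_l)+1$. The first thing I would do is check how $top(T)$ and $left(T)$ of the glued tableau $T$ decompose in terms of the pieces. Inspecting the cutting procedure in Figure~\ref{fig:coupe}: the first row of $T$ is the first row of $T_l$ together with whatever sits in the top strip, and similarly the first column of $T$ comes from $T_r$ and the left strip; so one expects a clean formula like $top(T) = top(T_l) + (\text{contribution of }nat)$ and $left(T) = left(T_r) + (\text{contribution of }nat)$, where the $nat$-contributions are governed by its width and height. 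Pinning down this decomposition exactly — in particular being careful about the single distinguished row and column that get added to $L$ and $R$ to form $T_l$ and $T_r$, and about which points of $nat$ end up in the first row versus first column of $T$ — is the crux of the argument, and I expect it to be the main obstacle. Everything else is bookkeeping with generating functions.

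Once the decomposition of $w(T)=a^{top(T)}b^{left(T)}$ over the triplet is established, I would sum. Write $c_n(a,b)=\sum_{T\in\T_n}c(T)w(T)$; by Lemma~\ref{lemme:coupe} this equals a sum over triplets, which factorizes as
\begin{equation*}
c_n(a,b)=\sum_{n_l+n_r+1=n}\ \sum_{T_l,T_r}\ \sum_{nat}\ w(T_l,T_r,nat),
\end{equation*}
where the weight splits according to the decomposition found above. Summing over tree-like tableaux of a given size with a prescribed value of $top$ (resp. $left$) is controlled by the known identity $T_m(a,b)=\sum_{T\in\T_m}a^{top(T)}b^{left(T)}=(a+b)(a+b+1)\cdots(a+b+m-2)$ from \cite{ABN}; more precisely I would use the finer fact that $\sum_{T\in\T_m}a^{top(T)}x = \ldots$ — i.e. keep $b$ (or $a$) as a formal marker for the statistic we still need, so that after fixing $top(T_l)=w-1$ we are left with a polynomial in $b$ times a shifted version of $T$. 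The non-ambiguous tree factor is handled by Proposition~\ref{prop:nat} (or directly by the product formula for $nats$ of given height and width), which tells us exactly how many $nats$ of height $h$ and width $w$ there are, weighted appropriately.

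Rather than carry out the triple sum blind, I would proceed by reverse-engineering: the conjectured answer is $\bigl((n-2)ab+\binom{n-2}{2}(a+b)+\binom{n-2}{3}\bigr)T_{n-2}(a,b)$ for non-occupied corners, and since the $(a,b)$-analogue of occupied corners equals $T_n(a,b)$ (as remarked in the text, by redoing \cite[Section~3.2]{LZ} with the statistics tracked), it suffices to prove the corresponding statement for $c_n(a,b)$, namely
\begin{equation*}
c_n(a,b)=\Bigl((n-2)ab+\tbinom{n-2}{2}(a+b)+\tbinom{n-2}{3}\Bigr)T_{n-2}(a,b)+T_n(a,b).
\end{equation*}
The three terms $(n-2)ab$, $\binom{n-2}{2}(a+b)$, $\binom{n-2}{3}$ strongly suggest an $(a,b)$-refinement of the three cases $2\binom{n}{2}(n-2)!$ and $\binom{n}{3}(n-2)!$ counted in the run-of-size-1 proof of the corollary at the end of Section~\ref{subsec-3.1}: a run of size~1 at the first position, at the last position, or strictly inside. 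I would try to show that a first-position run contributes weight with a factor $a$ (the cut leaves an empty top strip of a controlled kind), a last-position run a factor $b$, and an interior run a factor independent of $a,b$ beyond $T_{n-2}(a,b)$ — matching $ab$, and the symmetric $a+b$, and $1$ respectively after the appropriate $\binom{n-2}{k}$ counts of choosing the relevant values. The verification that the weighted counts in each of the three cases collapse to exactly these coefficients times $T_{n-2}(a,b)$ is then a finite computation, analogous to the one in the proof of Theorem~\ref{corner-number-A}, and specializing $a=b=1$ recovers $c_n(1,1)=n!\cdot\frac{n+4}{6}$ as a consistency check.
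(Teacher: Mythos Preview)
The first thing to flag is that this statement is a \emph{conjecture} in the paper: the authors do not prove it. Their partial progress is via a different decomposition than yours --- they partition non-occupied corners into four classes $NOC_{a,b}$, $NOC_{a,1}$, $NOC_{1,b}$, $NOC_{1,1}$ according to whether the corner sees intermediate points above it and to its left, prove direct bijections with smaller tree-like tableaux for the first three classes (yielding exactly the terms $(n-2)ab$, $\binom{n-2}{2}a$, $\binom{n-2}{2}b$ times $T_{n-2}(a,b)$), and explicitly state that the $NOC_{1,1}$ case, which should give $\binom{n-2}{3}T_{n-2}(a,b)$, remains open. So there is no ``paper's own proof'' to match.

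Your approach via Lemma~\ref{lemme:coupe} is natural, but the proposal contains a concrete geometric error and then an unjustified leap. In the cut of Lemma~\ref{lemme:coupe}, $L$ is the \emph{bottom} piece and $R$ is the \emph{right} piece; $T_l$ is built from $L$ by \emph{adding a brand-new first row}, and $T_r$ from $R$ by adding a brand-new first column. Hence the first row of $T$ does \emph{not} live in $T_l$ at all --- it is split between $M$ and $R$. The correct decomposition is $top(T)=top(nat)+top(T_r)$ and $left(T)=left(nat)+left(T_l)$, while $top(T_l)$ and $left(T_r)$ are completely determined by the \emph{dimensions} of $nat$ (namely $top(T_l)=\mathrm{width}(nat)-1$ and $left(T_r)=\mathrm{height}(nat)-1$). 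Consequently $w(T)=a^{top(T)}b^{left(T)}$ does \emph{not} factor as $w(T_l)\cdot w(T_r)\cdot(\text{something})$: you get $b^{left(T_l)}$ but not $a^{top(T_l)}$, and $a^{top(T_r)}$ but not $b^{left(T_r)}$. The resulting sum is a genuinely coupled convolution, not the clean product your ``bookkeeping'' sentence assumes, and the proposed matching of the three conjectural terms with the three run-of-size-$1$ positions is asserted, not argued --- the bijection of Lemma~\ref{lemme:triplet_to_permutation} passes through cycle decompositions and the Foata--Sch\"utzenberger transform, neither of which interacts transparently with $top$ and $left$. In short, the obstacle you label ``the crux'' is real and is not resolved by the outline; the paper's authors, who designed this very bijection, chose a different line of attack and still could not close the $NOC_{1,1}$ case.
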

In order to obtain the conjecture about corners,
we just have to add $T_n(a,b)$ to $noc_n(a,b)$. So, $c_n(a,b)$ can be rewritten as follows:
$$
c_n(a,b)
=
\left(
    a^2 + b^2 + nab + \frac{(n^2-n-4)(a+b)}{2} + \frac{(n+2)(n-2)(n-3)}{6}
\right)
\cdot
T_{n-2}(a,b).
$$
Let $X(s)$ be the random variable counting the number of locations of a state $s$ of size $n$
of the PASEP, where a particle may jump to the right or to the left.
We can compute the conjectural expected value of $X$ by using the formula of \cite[Section 4]{LZ},
\begin{align*}
\mathbb{E}(X)   &= \frac{1}{T_{n+1}(a,b)}\sum_{T\in\T_{n+1}}w(T)(2c(T)-1). \\
                &= \frac{
                    2\cdot(
                        a^2 + b^2 + (n+1)ab +
                        \frac{(n^2+n-4)(a+b)}{2} +
                        \frac{(n+3)(n-1)(n-2)}{6}
                    )
                }{
                    (a+b+n-1)(a+b+n-2)
                } -1\\
                &=\frac{
                    6[a^2 + b^2 + (n+1)ab] + 3(n^2+n-4)(a+b) + (n+3)(n-1)(n-2)              }{                   3(a+b+n-1)(a+b+n-2)
                } -1\\
                &= \frac{
                    3(a^2 + b^2) + 6nab + 3(n^2 - n - 1)(a + b) + n(n-1)(n-2)
                }{
                    (a+b+n-1)(a+b+n-2)
                }.
\end{align*}

Instead of studying $noc_n(a,b)$ as a sum over tree-like tableaux, we will study it
as a sum over non-occupied corners in $\T_n$. Let $noc$ be a non-occupied corner
of a tree-like tableau $T$, we define the weight of $noc$ as
$$w(noc):=w(T).$$
Let $NOC(\T_n)$ be the set of non-occupied corners in $\T_n$, we can rewrite
$noc_n(a,b)$ as
$$noc_n(a,b)=\sum_{noc\in NOC(\T_n)}w(noc).$$
The study of this conjecture brings a partitioning of non-occupied corners.
We denote by $NOC_{a,b}(\T_n)$ the set of non-occupied corners
with no point above them, in the same column, except in the first row
and no point at their left, in the same row, except in the first column.
The set of non-occupied corners with no point above them, except in the first row,
or no point at their left, except in the first column, but not both in the same time,
are denoted by $NOC_{a,1}(\T_n)$ and $NOC_{1,b}(\T_n)$ respectively.
The remaining corners are regrouped in $NOC_{1,1}(\T_n)$.
The different types of non-occupied corners are illustrated in Figure~\ref{fig:noc_partitioning}.
\begin{figure}[h!]
    \begin{center}
        \captionsetup[subfigure]{width=100pt}
        \subfloat[$NOC_{a,b}(\T_n)$]{
            \label{fig:noc_ab}
            \includegraphics[scale=.4]{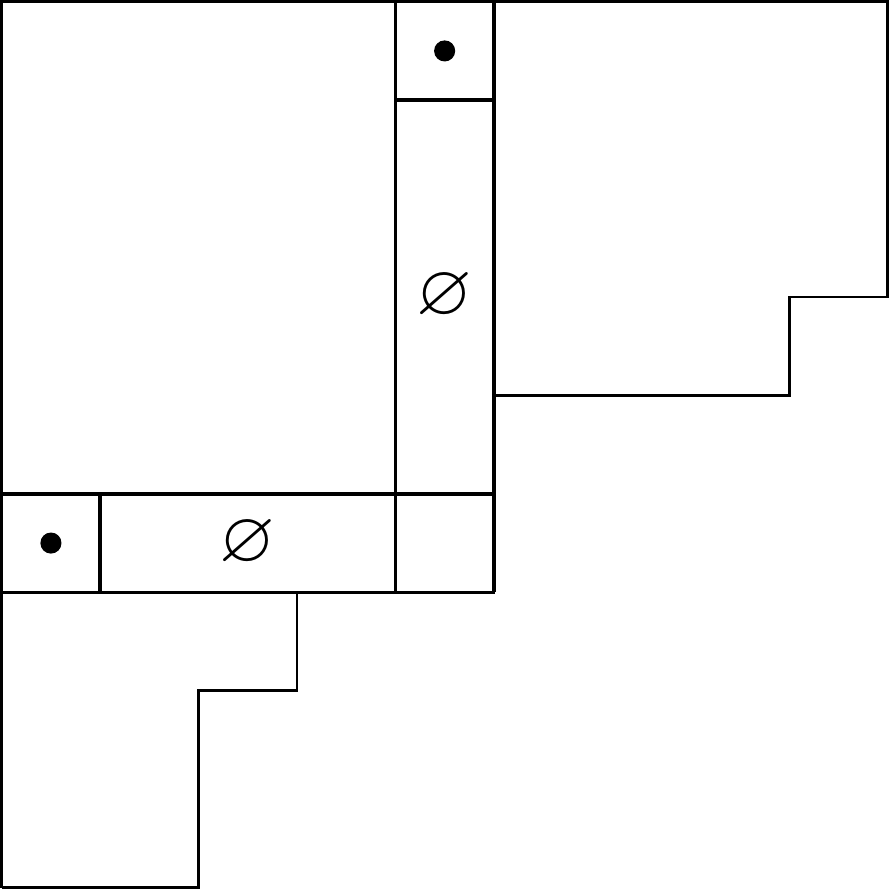}
        }
        \hfil\hfil
        \subfloat[$NOC_{a,1}(\T_n)$]{
            \label{fig:noc_a1}
            \includegraphics[scale=.4]{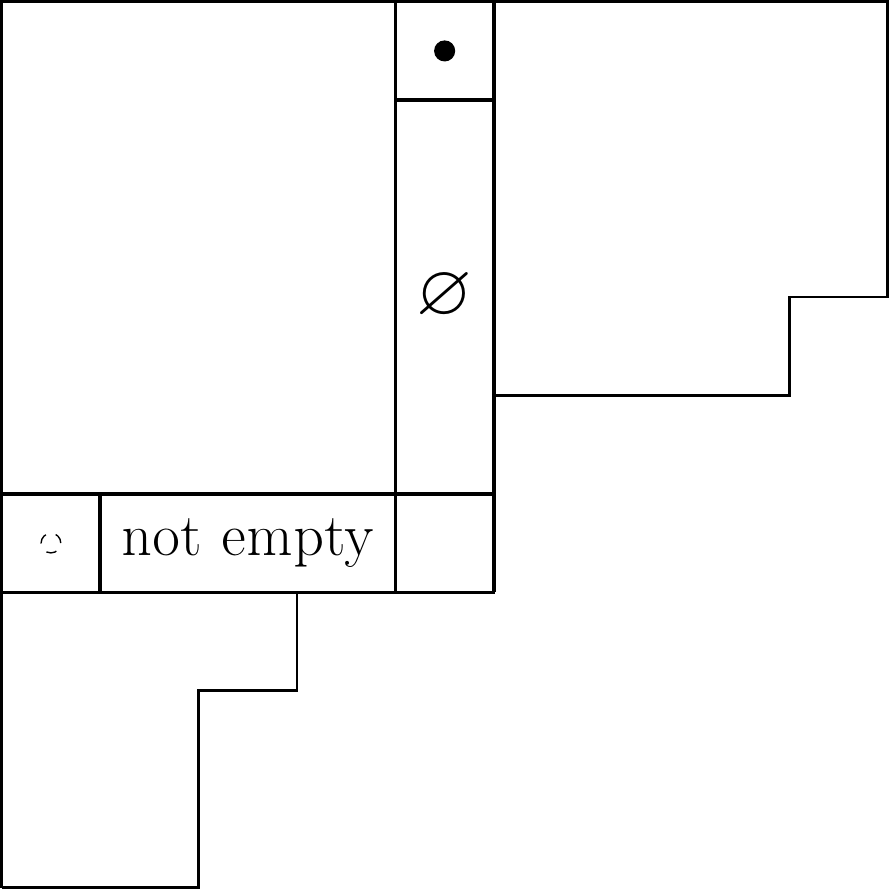}
        }
        \hfil\hfil
        \subfloat[$NOC_{1,b}(\T_n)$]{
            \label{fig:noc_1b}
            \includegraphics[scale=.4]{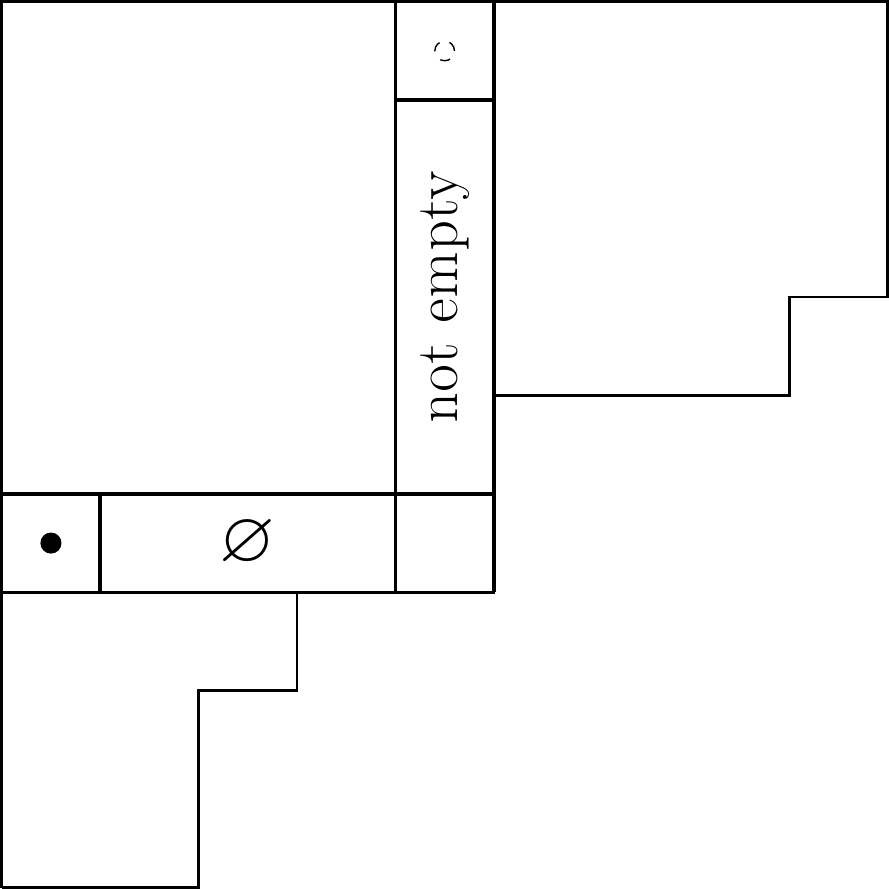}
        }
        \hfil\hfil
        \subfloat[$NOC_{1,1}(\T_n)$]{
            \label{fig:noc_11}
            \includegraphics[scale=.4]{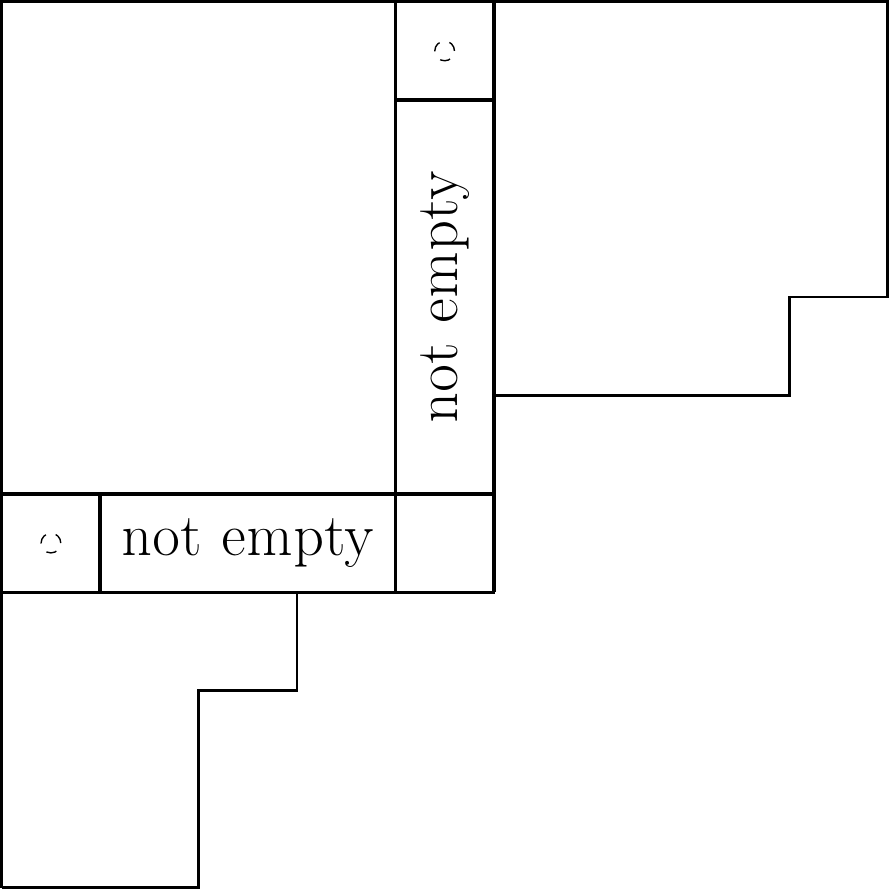}
        }
    \end{center}
    \caption{
        Partitioning of non-occupied corners in $\T_n$.
        ({\protect\tikz \protect\draw[dashed,color=black] (0,0) circle (.1);}
        means that the cell can be whether empty or occupied.)
    }
    \label{fig:noc_partitioning}
\end{figure}

\begin{prop}
For $n\geq 3$, the ($a,b$)-analogue of the enumeration of $NOC_{a,b}(\T_n)$ is
$$
\sum_{noc\in NOC_{a,b}(\T_n)} w(noc)
=
(n-2) \cdot ab \cdot T_{n-2}(a,b)
.$$
\end{prop}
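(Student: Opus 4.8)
The plan is to realise $\sum_{noc\in NOC_{a,b}(\T_n)}w(noc)$ as a sum over $\T_{n-2}$ by exhibiting a weight‑respecting bijection
$$NOC_{a,b}(\T_n)\;\longleftrightarrow\;\T_{n-2}\times\{1,2,\dots,n-2\},$$
under which $w(noc)=ab\cdot w(\hat T)$, and then to invoke the ($a,b$)-analogue $\sum_{\hat T\in\T_{n-2}}w(\hat T)=T_{n-2}(a,b)$ recalled from \cite{ABN}. The forward map is ``delete the row and column of the corner''. Let $c$ be a non-occupied corner of $T\in\T_n$ lying in $NOC_{a,b}(\T_n)$, say $c$ sits in row $R$ and column $C$. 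By definition of $NOC_{a,b}$, column $C$ has a unique point and it is in the first row, and row $R$ has a unique point and it is in the first column. First one checks $C\neq1$ and $R\neq1$: if $C=1$ the unique point of column $1$ is the root $(1,1)$, forcing $c=(R,1)$, and since row $R$'s unique point is $(R,1)=c$ this point would be the non-occupied cell $c$, contradicting rule~(3); the case $R=1$ is symmetric. Hence the point $(1,C)$ is a non-root point of the first row (a ``top'' point) and $(R,1)$ is a non-root point of the first column (a ``left'' point). Now let $\hat T$ be $T$ with row $R$ and column $C$ removed.

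Next I would verify that $\hat T\in\T_{n-2}$ and keep track of the statistics. The two deleted lines carry exactly two points, distinct from each other and from the root, and $c$ carries none, so $\hat T$ has $n-2$ points; no row or column of $\hat T$ is empty (column $C$'s only point is in row $1$, so no other row loses its only point, and dually for $R$); the root is untouched; and rule~(2) survives because deleting $R$ and $C$ preserves, for every surviving point, its nearest pointed cell above in its column and to its left in its row (the only point that could disappear from such a role is $(R,1)$ or $(1,C)$, and then the root plays the role instead), so the exclusive ``either\dots or, but not both'' is maintained. Since precisely one top point and one left point are removed, $top(\hat T)=top(T)-1$ and $left(\hat T)=left(T)-1$, hence $w(T)=ab\cdot w(\hat T)$; recalling $w(noc):=w(T)$, the weight is multiplied by $ab$.

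Then I would describe the inverse and pin down the factor $n-2$. Work on the southeast border path, a monotone word in south and west steps which, for a tree-like tableau, begins with a south step and ends with a west step (no empty rows or columns). The bottom and right edges of the corner $c$ are two consecutive border edges of $T$, a south step $s_R$ immediately followed by a west step $w_C$, and deleting $R$ and $C$ exactly excises this adjacent $SW$ pair, leaving the border word of $\hat T$, which has $n-1$ letters and hence $n$ gaps. Conversely, given $\hat T\in\T_{n-2}$, insert an adjacent $SW$ pair into one of these gaps and refill by: placing the new top point (in the first row of the new column) and the new left point (in the first column of the new row), keeping every old symbol, and leaving every other new cell empty. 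The two extreme gaps (before all letters, after all letters) are exactly the invalid ones: inserting there makes the new column, respectively the new row, consist of a single cell, which is then simultaneously the corner $c$ and the forced point of that column (resp. row), contradicting that $c$ is non-occupied. For each of the remaining $n-2$ interior gaps one checks the construction produces a genuine element of $\T_n$ with $c\in NOC_{a,b}(\T_n)$: the first letter being a south step and the last a west step force $R\neq1$ and $C\neq1$, so $c$ receives no point; monotonicity of the border word gives a legitimate Young shape; and rule~(2) is checked as above. Recording which interior gap was used gives the index in $\{1,\dots,n-2\}$, and deletion and insertion are visibly mutually inverse.

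Finally, summing over the bijection,
$$\sum_{noc\in NOC_{a,b}(\T_n)}w(noc)=\sum_{\hat T\in\T_{n-2}}\;\sum_{i=1}^{n-2}ab\cdot w(\hat T)=(n-2)\,ab\sum_{\hat T\in\T_{n-2}}w(\hat T)=(n-2)\,ab\,T_{n-2}(a,b),$$
which is the claim. I expect the main obstacle to be the routine-but-fiddly verification that deletion and insertion stay inside $\T_{\bullet}$ — in particular that the exclusivity in rule~(2) is preserved and that precisely the two extreme gaps fail — rather than any computation, which is immediate once the bijection is established. (It is also worth sanity-checking $n=3$: $\T_1$ is a single tableau of weight $1$ with one interior gap, giving $ab = 1\cdot ab\cdot T_1(a,b)$.)
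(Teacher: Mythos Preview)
Your proposal is correct and follows essentially the same approach as the paper: both delete the row and column of the corner to obtain a tree-like tableau of size $n-2$, record the position along the border, observe that the weight drops by exactly one top and one left point, and count $n-2$ valid positions. Your ``interior gaps'' in the border word coincide with the paper's ``interstices between two consecutive border edges''; you simply give more of the routine verifications (why $R,C\neq 1$, why the extreme gaps fail, why rule~(2) survives) than the paper spells out.
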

\begin{proof}
In order to show that, we put in bijection
non-occupied corners of $\T_n$ of this shape and
the set of pairs $(T,i)$ where $T$ is a tree-like tableau
of size $n-2$ and $i$ is an interstice between
two consecutive border edges of $T$.
Let $noc\in NOC_{a,b}(\T_n)$ and $T'$ be its tree-like tableau of size $n$.
Let $j$ be the integer such that $noc$ is the cell $(j,j+1)$,
We obtain a tree-like tableau $T$ of size $n-2$ by removing
the row $j$ and the column $j+1$.
The north-west corner $i$ of $noc$ corresponds to an interstice between two consecutive
border edges of $T$, we associate to $noc$ the pair $(T,i)$.
Conversely, let us consider a pair $(T,i)$.
We construct a tree-like tableau $T'$ as follows,
we add to $T$ a row and a column ending a common cell $c$
with $i$ as its north-west corner, and we had a point to the left-most (resp. highest)
cell of the new row (resp. column). In particular, $c$ is in $NOC_{a,b}(\T_n)$.
The bijection is illustrated in Figure \ref{fig:noc_ab_bij}.
\begin{figure}[h!]
    \begin{center}
        \captionsetup[subfigure]{width=150pt}
        \subfloat[Bijection between $NOC_{a,b}(\T_n)$ and pairs $(T,i)$.]
            {
                \label{fig:noc_ab_bij}
                \includegraphics[scale=.4]{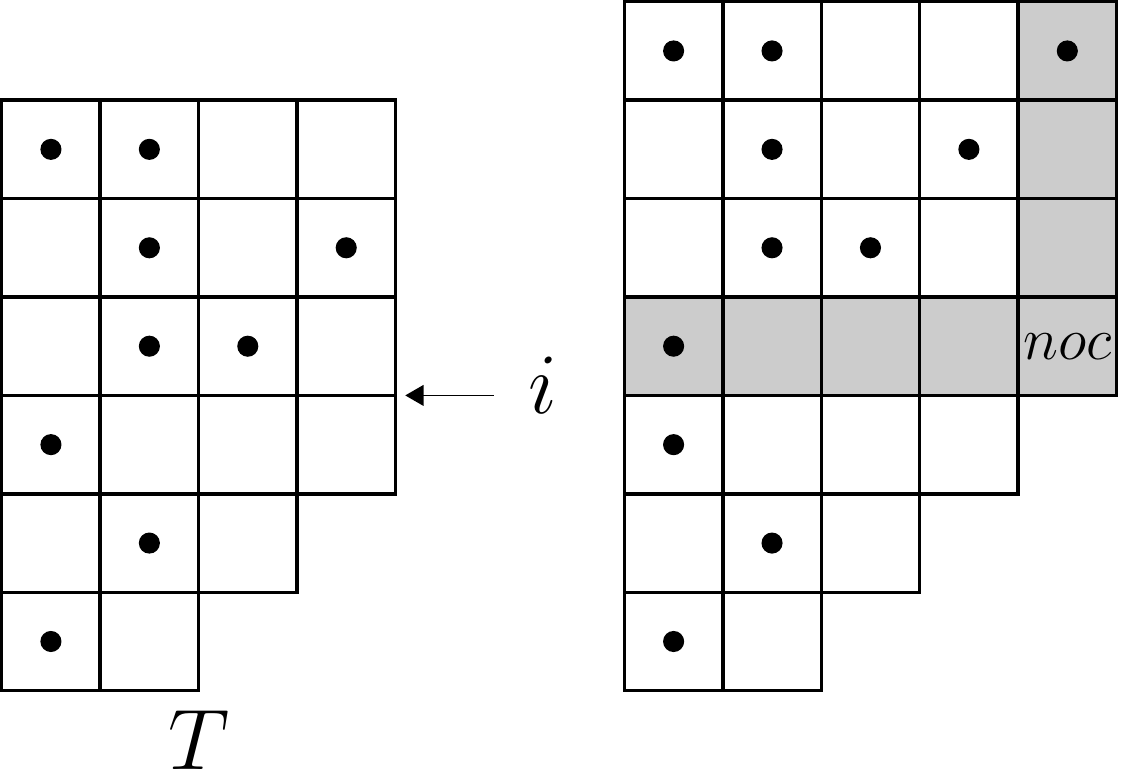}
            }
        \hfil\hfil
        \subfloat[
            Bijection between pairs $(T,r)$
            and $NOC_{a,b}(\T_n)\bigcup NOC_{a,1}(\T_n)$ .
        ]
            {
                \label{fig:noc_a1_bij}
                \includegraphics[scale=.4]{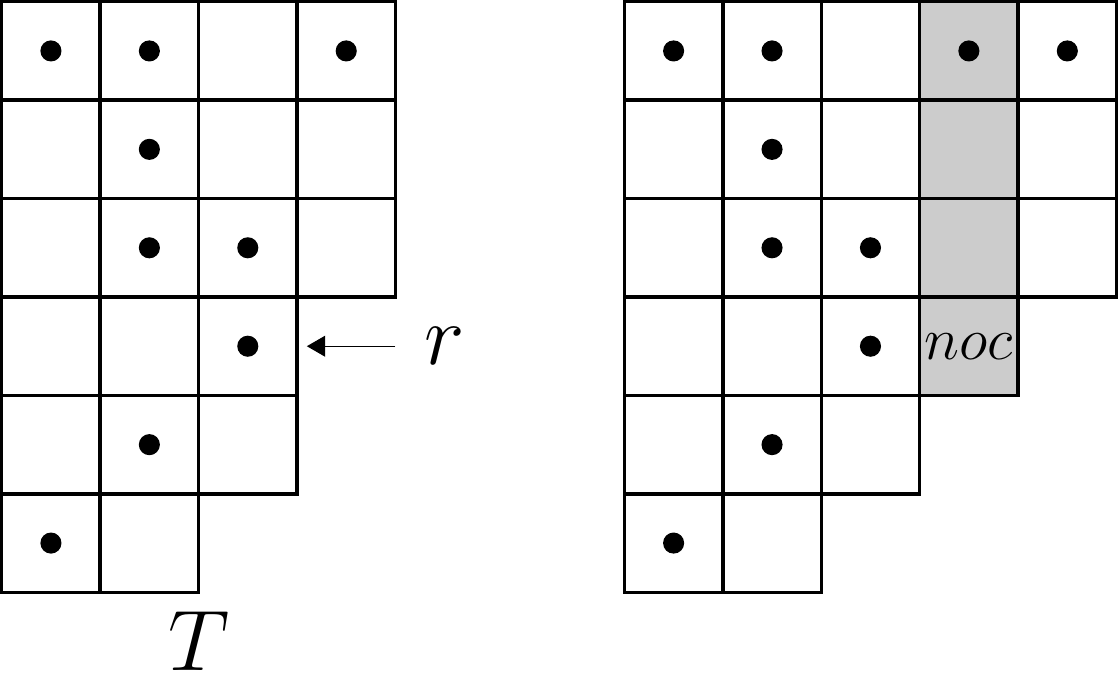}
            }
        \caption{}
        \label{fig:noc_a1_bijection}
    \end{center}
\end{figure}

For each tree-like tableau $T$ of size $n-2$, there are $n-2$ choices
of interstice $i$, in addition, the weight of $noc$ is equal to $ab\cdot w(T)$.
As a result,
$$
\sum_{
    noc\in NOC_{a,b}(\T_n)
} w(noc)
=
\sum_{
    \substack{
        T\in\T_{n-2} \\
        i \text{ interstice of } T
    }
} ab \cdot w(T)
=
(n-2) \cdot ab \sum_{T\in\T_{n-2}} w(T)
=
(n-2) \cdot ab \cdot T_{n-2}(a,b)
.$$
\end{proof}
We are also able to give an ($a,b$)-analogue of the enumeration of $NOC_{a,1}(\T_n)$
and $NOC_{1,b}(\T_n)$. In order to do that, we need an ($a,b$)-analogue
of the enumeration of tree-like tableaux of size $n$
with a fixed number of rows $k$,
$$
\A{n,k}
=
\sum_{
    \substack{T\in\T_n \\ T\text{ has }k\text{ rows }}
}
a^{top(T)}b^{left(T)}.
$$

From \cite[Proposition~3.4]{ABN} we already know
that $A_{1,1}(n,k)$ is the Eulerian number and satisfies
$$A_{1,1}(n+1,k) = kA_{1,1}(n,k) + (n+2-k)A_{1,1}(n,k-1).$$
In the general case, the linear recurrence satisfied by $\A{n,k}$ is
\begin{equation}\label{Euler_a_b}
\A{n+1,k}=(a-1+k)\A{n,k}+(b+n+1-k)\A{n,k-1}.
\end{equation}
As in \cite{ABN}, we consider the Eulerian polynomial:
$$A_n(t):=\sum_{k=1}^{n}\A{n,k}t^k.$$
\begin{lem}
For $n\geqslant 2$,
$$A_n(1) = T_n(a,b)\text{ and } A_n'(1) =(a+bn+\binom{n}{2}-1)T_{n-1}(a,b).$$
\end{lem}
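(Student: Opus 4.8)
The proof rests on the recurrence \eqref{Euler_a_b} for $\A{n,k}$ together with the two defining sums $A_n(1)=\sum_k \A{n,k}$ and $A_n'(1)=\sum_k k\,\A{n,k}$. The plan is to derive from \eqref{Euler_a_b} a first-order recurrence for $A_n(1)$ and a linear recurrence for $A_n'(1)$, then solve them. Summing \eqref{Euler_a_b} over all $k$ gives
$$
A_{n+1}(1)=\sum_k (a-1+k)\A{n,k}+\sum_k (b+n+1-k)\A{n,k-1}.
$$
In the first sum, $\sum_k k\,\A{n,k}=A_n'(1)$ and $\sum_k (a-1)\A{n,k}=(a-1)A_n(1)$. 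In the second sum, reindex $k\mapsto k+1$: it becomes $\sum_k (b+n-k)\A{n,k}=(b+n)A_n(1)-A_n'(1)$. The $A_n'(1)$ terms cancel, leaving $A_{n+1}(1)=(a+b+n-1)A_n(1)$. Since $A_1(1)=\A{1,1}$ should equal $1$ (the single empty tree-like tableau, with $top=left=0$), telescoping yields $A_n(1)=(a+b)(a+b+1)\cdots(a+b+n-2)=T_n(a,b)$, which matches the formula for $T_n(a,b)$ quoted from \cite{ABN}.

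For the derivative, differentiate \eqref{Euler_a_b} term by term as polynomials in $t$ and evaluate at $t=1$; equivalently, multiply \eqref{Euler_a_b} by $k$ before summing. Writing $k(a-1+k)=(a-1+k)k + (k^2-k) \cdot 0$ — more carefully, $\sum_k k(a-1+k)\A{n,k}=(a-1)A_n'(1)+\sum_k k^2\A{n,k}$, and for the shifted term $\sum_k k(b+n+1-k)\A{n,k-1}$ reindex $k\mapsto k+1$ to get $\sum_k (k+1)(b+n-k)\A{n,k}$, which expands into a combination of $\sum_k \A{n,k}$, $\sum_k k\,\A{n,k}$ and $\sum_k k^2\,\A{n,k}$. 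The $\sum_k k^2\,\A{n,k}$ contributions cancel exactly as the $A_n'(1)$ ones did in the previous step, so $A_{n+1}'(1)$ becomes an explicit affine combination of $A_n'(1)$ and $A_n(1)=T_n(a,b)$. I expect the recurrence to come out to something like $A_{n+1}'(1)=(a+b+n-1)A_n'(1)+(b+n)T_n(a,b)$ (the precise linear coefficients are the one routine computation I would actually carry out).

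Given such a recurrence of the form $A_{n+1}'(1)=(a+b+n-1)A_n'(1)+c_n\,T_n(a,b)$ with $T_{n+1}(a,b)=(a+b+n-1)T_n(a,b)$, divide through by $T_{n+1}(a,b)$ to get $A_{n+1}'(1)/T_{n+1}(a,b)=A_n'(1)/T_n(a,b)+c_n/(a+b+n-1)$. If, as anticipated, $c_n=b+n$, then with $T_{n+1}=(a+b+n-1)T_n$ one has $c_n/(a+b+n-1)$ telescoping cleanly after splitting $b+n = (a+b+n-1)-(a-1)$, or one simply checks the closed form by induction: assume $A_n'(1)=(a+bn+\binom n2 -1)T_{n-1}(a,b)$, substitute into the recurrence, and verify both sides equal $(a+b(n+1)+\binom{n+1}2-1)T_n(a,b)$ using $\binom{n+1}2-\binom n2=n$ and $T_n=(a+b+n-2)T_{n-1}$. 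The base case $n=2$ is a direct check: $A_2(t)=\A{2,1}t+\A{2,2}t^2$ with $\A{2,1},\A{2,2}$ read off from \eqref{Euler_a_b} applied to $\A{1,1}=1$, giving $A_2'(1)$ which must equal $(a+2b+\binom22-1)T_1(a,b)=(a+2b)\cdot 1$.

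The main obstacle is purely bookkeeping: making sure the index shift $k\mapsto k+1$ in the second sum of \eqref{Euler_a_b} is handled with the correct boundary terms (the ranges of $k$ over which $\A{n,k}$ is nonzero), and tracking the coefficients of $\sum_k k^2\A{n,k}$ carefully enough to confirm they cancel — this cancellation is what makes the derivative recurrence first-order in $A_n'(1)$ rather than involving a new unknown moment. Once that cancellation is verified and $c_n$ identified, the rest is the short induction above; I would present it as a two-line verification of the closed form rather than solving the recurrence from scratch.
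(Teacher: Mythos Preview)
Your approach is essentially the paper's: derive from \eqref{Euler_a_b} a first-order recurrence for $A_n'(1)$ in terms of $A_{n-1}'(1)$ and $A_{n-1}(1)$, then telescope after dividing by the appropriate $T$-factor. Two small corrections: first, $A_n(1)=T_n(a,b)$ is immediate from the definition (summing $\A{n,k}$ over $k$ just drops the row-count refinement), so no recurrence is needed there; second, when you carry out the ``routine computation'' you will find the coefficient of $A_n'(1)$ is $a+b+n-2$, not $a+b+n-1$, so the telescoping step requires dividing by $T_n(a,b)$ rather than $T_{n+1}(a,b)$ --- with that fix your induction goes through exactly as you outline.
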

\begin{proof}
The first identity is a consequence of the definition of $A_n(t)$.
For the second one,
we deduce from \eqref{Euler_a_b} that $A_n(t)$ satisfies the recurrence relation
\begin{equation*}
\begin{split}
A_n(t)&=(a-1)A_{n-1}(t)+tA'_{n-1}(t)+(b+n-1)tA_{n-1}(t)-t^2A'_{n-1}(t)\\
      &=(a-1)A_{n-1}(t)+(b+n-1)tA_{n-1}(t)+t(1-t)A'_{n-1}(t),
\end{split}
\end{equation*}
with initial condition $A_1(t)=t$.
Hence, by differentiating and evaluating at $t=1$,
we get the following recurrence relation for $A'_n(1)$
\begin{align*}
A_n'(1) &= (a-1)A'_{n-1}(1) + (b+n-1)(A'_{n-1}(1)+A_{n-1}(1)) - A'_{n-1}(1)\\
        &= (a+b+n-3)A'_{n-1}(1)+(b+n-1)A_{n-1}(1).
\end{align*}
For $n\geqslant 3$, dividing by $T_{n-1}(a,b)$, we get
$$\frac{A_n'(1)}{T_{n-1}(a,b)}=\frac{A'_{n-1}(1)}{T_{n-2}(a,b)}+(b+n-1).$$
Since $A'_2(1)=a+2b$, for $n\geqslant 2$,
$$A'_n(1)=(a+bn+\binom{n}{2}-1)(a+b+n-3)\cdots(a+b).$$
\end{proof}
We can now prove the following result,
\begin{prop}
For $n\geq 3$, the ($a,b$)-analogue of the enumeration of $NOC_{a,1}(\T_n)$ is
$$
f_n(a,b)
:=
\sum_{noc\in NOC_{a,1}(\T_n)} w(noc)
=
\binom{n-2}{2}\cdot a \cdot T_{n-2}(a,b)
,$$
and the ($a,b$)-analogue of the enumeration of $NOC_{1,b}(\T_n)$ is
$$
g_n(a,b)
:=
\sum_{noc\in NOC_{1,b}(\T_n)} w(noc)
=
\binom{n-2}{2}\cdot b \cdot T_{n-2}(a,b).$$
\end{prop}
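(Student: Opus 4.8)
The plan is to prove the formula for $f_n$ by a column‑deletion/insertion bijection, and to deduce the formula for $g_n$ from it by symmetry. For the latter: the axial symmetry with respect to the main diagonal of the underlying diagram (already used in Section~\ref{sec:further_results}) is an involution $\tau$ on $\T_n$ which exchanges $top$ and $left$, hence sends $w(T)=a^{top(T)}b^{left(T)}$ to $a^{left(T)}b^{top(T)}$; since it exchanges rows and columns it carries a non‑occupied corner satisfying the column condition ``no point above it except in the first row'' to one satisfying the row condition ``no point to its left except in the first column'', and vice versa, so it induces a bijection between the $NOC_{a,1}$‑corners of $T$ and the $NOC_{1,b}$‑corners of $\tau(T)$. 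Summing weights over all tree‑like tableaux this gives $g_n(a,b)=f_n(b,a)$, and since $T_{n-2}(a,b)$ is symmetric in $a,b$ the formula $g_n(a,b)=\binom{n-2}{2}\,b\,T_{n-2}(a,b)$ follows from that for $f_n$.

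For $f_n$ I would work with the disjoint union $U_n:=NOC_{a,b}(\T_n)\cup NOC_{a,1}(\T_n)$, which is precisely the set of non‑occupied corners $c$ of tableaux in $\T_n$ satisfying the column condition. If $c=(\rho,\gamma)$ is such a corner of $T'\in\T_n$, then the $\gamma$‑th column of $T'$ consists of the rows $1,\dots,\rho$, is empty below the first row, and — since every column carries a point and otherwise the row of $c$ could not be nonempty — carries exactly one point, a non‑root point in its top cell. I propose the map $\Phi\colon c\mapsto(T,r)$, where $T\in\T_{n-1}$ is obtained by deleting the whole $\gamma$‑th column of $T'$ and $r:=\rho$; deleting that column destroys one top point and no row, so $w(T')=a\,w(T)$ and $2\le r\le k(T)$, with $k(T)$ the number of rows of $T$. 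Its inverse takes $(T,r)$ with $2\le r\le k(T)$ and inserts a new column of height $r$ just to the right of the last cell of row $r$ (one new cell in each of rows $1,\dots,r$ at that horizontal position, the rows below untouched), putting a single point in its top cell; the bottom cell of the inserted column is then a non‑occupied corner in row $r$ satisfying the column condition, and $\Phi$ is a bijection onto $\{(T,r):T\in\T_{n-1},\,2\le r\le k(T)\}$.

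Granting this, the count is routine. For fixed $T$ there are $k(T)-1$ admissible $r$, so, writing $A_{n-1}(t)=\sum_{T\in\T_{n-1}}t^{k(T)}w(T)$,
$$\sum_{c\in U_n}w(c)=a\sum_{T\in\T_{n-1}}\bigl(k(T)-1\bigr)w(T)=a\bigl(A'_{n-1}(1)-A_{n-1}(1)\bigr).$$
By the lemma just proved, $A'_{n-1}(1)=\bigl(a+(n-1)b+\tbinom{n-1}{2}-1\bigr)T_{n-2}(a,b)$ and $A_{n-1}(1)=T_{n-1}(a,b)=(a+b+n-3)T_{n-2}(a,b)$, whence
$$\sum_{c\in U_n}w(c)=a\Bigl((n-2)b+\tbinom{n-1}{2}-n+2\Bigr)T_{n-2}(a,b)=a\Bigl((n-2)b+\tbinom{n-2}{2}\Bigr)T_{n-2}(a,b).$$
Since $NOC_{a,b}(\T_n)$ and $NOC_{a,1}(\T_n)$ partition $U_n$, subtracting the value $\sum_{c\in NOC_{a,b}(\T_n)}w(c)=(n-2)\,ab\,T_{n-2}(a,b)$ from the previous proposition gives $f_n(a,b)=\binom{n-2}{2}\,a\,T_{n-2}(a,b)$.

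The step I expect to cost the most work is checking that $\Phi$ is genuinely a well‑defined bijection. One must verify that deleting the $\gamma$‑th column of $T'$ leaves a tree‑like tableau — rules (1) and (3) are immediate, and for rule (2) one notes that whole columns are merely shifted and the deleted column was empty off the first row, so no surviving non‑root point loses or gains its unique ancestor — and that, conversely, the insertion always produces a tree‑like tableau with the claimed corner. The delicate point is that when several consecutive rows of $T$ have equal length the shape of the reconstructed $T'$ does not by itself say which of them carries the new corner; this is exactly why $\Phi$ records the row $r$ rather than reading it off the shape, and one must confirm that every $r\in\{2,\dots,k(T)\}$ — including those with $\lambda_r=\lambda_{r-1}$ or $\lambda_r=\lambda_{r+1}$ — yields a valid and distinct element of $U_n$. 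Finally, since corners are named in the paper via the southeast‑border labelling, a small translation between physical column positions and border labels is needed to see that the deleted (resp. inserted) column is the correct one.
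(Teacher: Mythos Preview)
Your approach is essentially identical to the paper's: the same column-deletion bijection from $NOC_{a,b}(\T_n)\cup NOC_{a,1}(\T_n)$ to pairs $(T,r)$ with $T\in\T_{n-1}$ and $r$ a non-first row, the same computation via $a\bigl(A'_{n-1}(1)-A_{n-1}(1)\bigr)$ and subtraction of the $NOC_{a,b}$ contribution, and the same diagonal-symmetry argument yielding $g_n(a,b)=f_n(b,a)$. Your write-up is in fact more careful than the paper's (which dispatches the bijection with ``It should be clear that this operation is revertible''), and your flagged checks---that the deleted column carries a single non-root top point, that $\rho\geq2$, and that distinct $r$'s with equal row-lengths still give distinct preimages---are exactly the right ones.
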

\begin{proof}
In order to compute $f_n(a,b)$ we put in bijection elements of
$NOC_{a,1}(\T_n)\bigcup NOC_{a,b}(\T_n)$
with pairs $(T,r)$ where $T$ is a tree-like tableaux of size $n-1$ and $r$
is a row of $T$, different from the first one.
Let $noc\in NOC_{a,1}(\T_n)$ and $T'$ its tree-like tableau
of size $n$. Let $j$ be the integer such that $noc$ is the cell $(j,j+1)$.
We obtain $T$ by removing the column $j+1$, $r$ corresponds to the row $j$.
It should be clear that this operation is revertible.
The bijection is illustrated in Figure~\ref{fig:noc_a1_bij}.
Using the previous notations, $w(noc)=a\cdot w(T)$.
As a result,
\begin{align*}
f_n(a,b)    &= \sum_{
                  \substack{
                      T\in\T_{n-1}\\
                      r\text{ a row of $T$}
                  }
               } a\cdot w(T)
               -
               \sum_{noc\in NOC_{a,b}(\T_n)} w(noc) \\
            &= a \cdot \sum_{k=1}^{n-1} (k-1) \cdot \A{n-1,k}
               -
               (n-2) \cdot ab \cdot T_{n-2}(a,b) \\
            &= a \cdot (A'_{n-1}(1) - A_{n-1}(1)) - (n-2) \cdot ab \cdot T_{n-2}(a,b) \\
            &= a\left[(a+(n-1)b+\binom{n-1}{2}-1)-(a+b+n-3)-(n-2)b\right]\cdot T_{n-2}(a,b) \\
            &=a\binom{n-2}{2} \cdot T_{n-2}(a,b)
\\
\end{align*}

The axial symmetry with respect to the main diagonal
of the underlying diagram gives a bijection between
$NOC_{a,1}(\T_n)$ and $NOC_{1,b}(\T_n)$,
a top point becomes a left point and conversely.
Hence, $g_n(a,b)$ can be deduced from $f_n(a,b)$
by the identity $g_n(a,b)=f_n(b,a)$. Since $T_{n-2}(a,b)$
is a symmetric polynomial,
$g_n(a,b)=b\binom{n-2}{2} \cdot T_{n-2}(a,b)$.
\end{proof}

To prove the conjecture, we miss the ($a,b$)-analogue
of the enumeration of $NOC_{1,1}(\T_n)$.
The main issue is how to link these non-occupied corners with tree-like
tableaux of smaller size.

\section{Conclusion and Remarks}
We consider the enumeration of corners in tree-like tableaux of size $n$ via counting corners in permutation tableaux of length $n$, including non-occupied corners in tree-like tableaux of size $n$, and give exact formulas for these numbers.
Moreover, we gave a bijection between corners and runs of size 1 in permutations,
which gave an alternative proof of the enumeration of corners.
Finally, we introduced an ($a$,$b$)-analogue of this enumeration, and explained the
consequences on the PASEP.

It is worthy to note that the number of non-occupied corners in tree-like tableaux of size $n+1$ occurs in \cite[A005990]{Sloane}, which enumerates the total positive displacement of all letters in all permutations on $[n]$, i.e,
\begin{equation*}
\sum_{\pi\in \mathfrak{S}_n} \sum_{i=1}^n \max\{\pi_i-i,0\},
\end{equation*}
the number of double descents in all permutations of $[n-1]$ and also the sum of the excedances of all permutations of $[n]$. We say that $i$ is a double descent of a permutation $\pi=\pi_1\pi_2\cdots\pi_n$ if $\pi_i>\pi_{i+1}>\pi_{i+2}$, with $1\leqslant i \leqslant n-2$ and an excedance if $\pi_i>i$, with $1\leq i\leq n-1$.
Besides, they are also related to coefficients of Gandhi polynomials, see \cite{T}.
To find the relationship between both of them is also an interesting problem.

\noindent{\bf Acknowledgements.}
This work was supported by the 973 Project, the PCSIRT Project of the Ministry of Education and the National Science Foundation of China.

A part of this research (Section~\ref{sec:further_results}) was driven by computer exploration using the open-source software \texttt{Sage}~\cite{sage} and its algebraic combinatorics features developed by the \texttt{Sage-Combinat} community~\cite{Sage-Combinat}.

\end{document}